\documentclass{amsart}
\usepackage{amsfonts,amssymb,latexsym,amsthm,amsmath}
\usepackage{euscript}
\newtheorem{theorem}{Theorem}

\newtheorem{lemma}[theorem]{Lemma}

\newtheorem{corollary}[theorem]{Corollary}

\newcommand{\ilimit}{\mbox{$\,\displaystyle{\lim_{\longleftarrow}}\,$}}
\usepackage{tikz}

\numberwithin{equation}{section}
\usetikzlibrary{matrix}
\usepackage[arrow,matrix,curve]{xy}\SilentMatrices
\def\xyma{\xymatrix@M.7em}

\begin{document}

\title{Dimension Quotients, Fox Subgroups and Limits of Functors}
\author{Roman Mikhailov and Inder Bir S. Passi}
\maketitle
\begin{abstract}This paper presents a description of the fourth dimension quotient, using the theory of limits of functors from the category of free presentations of a given group to the category of abelian groups. A functorial description of a quotient of the third Fox subgroup is given and, as a consequence, an identification (not involving an isolator) of the third Fox subgroup  is obtained. It is shown that the limit over the category of free representations of the third Fox quotient represents the composite of two derived quadratic functors.
\end{abstract}
\section{Introduction}
Given a group $G$, let $\mathbb Z[G]$ be its integral group ring and $\mathfrak g$ the augmentation ideal of $\mathbb Z[G]$. The dimension quotients of $G$ are defined to be its subquotients $D_n(G)/\gamma_n(G),\ n\geq 1$, where $D_n(G):=G\cap (1+\mathfrak g^n)$ is the $n$th dimension subgroup of $G$ and $\gamma_n(G) $ is the $n$th term in the lower central series $\{\gamma_i(G)\}_{i\geq 1}$ of $G$. The  evaluation  of dimension quotients is a challenging problem in the theory of group rings, and has been a subject of investigation since 1935 (\cite{Cohn:1952}, \cite{Gupta:1987},  \cite{Losey:1960}, \cite{Magnus:1935}, \cite{Magnus:1937}, \cite{MP:2009},  \cite{Passi:1968}, \cite{Passi:1979}). While these quotients are trivial for free groups (\cite{Magnus:1937}, \cite{Witt:1937}),  for $n=1,\,2,\,3$ in case of all groups, and for odd prime-power groups \cite{Passi:1968}, it was first shown in 1972 by E. Rips \cite{Rips:1972} that $D_4(G)/\gamma_4(G)$ is, in general, non-trivial. Subsequently, the structure of these fourth  dimension quotients has been described by K. I. Tahara (\cite{Tahara:1977a}, \cite{Tahara:1977b})  and Narain Gupta \cite{Gupta:1987}. Instances of the non-triviality of dimension quotients in all dimensions $n\geq 4$ are now known (\cite{Gupta:1987}; \cite{MP:2009}, p.\,111); however, their precise structure still remains an open problem.
\par\vspace{.25cm}
Another challenging problem concerning normal subgroups determined by two-sided ideals in group rings is the so-called Fox subgroup problem (\cite{Fox}, page 557; \cite{Birman:1974}, Problem 13; \cite{Gupta:1987}). It asks for the identification of the normal subgroup $F(n,\,R):=F\cap (1+\mathfrak r\mathfrak f^n)$ for a  free group $F$ and its normal subgroup $R$. A solution to this problem has been given by I. A. Yunus \cite{Yunus:1984} and Narain Gupta (\cite{Gupta:1987}, Chapter III). It turns out that while $F(1,\,R)=\gamma_2(R)$, $F(2,\,R)=[R\cap \gamma_2(F),\,R\cap \gamma_2(F)]\gamma_3(R)$, the identification of $F(n,\,R), n\geq 3$, is given as an isolator of a subgroup. For instance, \linebreak $F(3,\,R)=\sqrt{G(3,\,R)}$, where $$G(3,\,R):=\gamma_2(R\cap \gamma_3(F))[[R\cap \gamma_2(F),\,R],\, R\cap \gamma_2(F)]\gamma_4(R).$$ This identification essentially amounts to the one when the coefficients of the group ring are  in the field of rational numbers, rather than in the ring of integers,  and thus raises the question of the precise determination of the involved torsion.
\par\vspace{.25cm}
Our aim in this paper is to present an entirely different approach to the above problems via {derived functors and} limits of functors over the category of free presentations. Our approach is motivated by the connections between the theory of limits of functors with  homology of groups, derived functors in the sense of Dold-Puppe \cite{DP:1961}, cyclic homology  and group rings (\cite{EM:2008}, \cite{MP:2016}, \cite{MP:2017}, \cite{Quillen:1989}). For instance, the even dimensional integral homology groups turn up  as limits \cite{EM:2008}:
$$
H_{2n}(G)=\ilimit (R/\gamma_2(R))_F^{\otimes n},
$$
where $(R/\gamma_2(R))^{\otimes n}$ is the $n$th tensor power of the relation module $R/\gamma_2(R))$, and $(R/\gamma_2(R))_F^{\otimes n}$ is the group of $F$-coinvariants,  the action of $F$ on $(R/\gamma_2(R))^{\otimes n}$, via conjugation in $R$,  being  diagonal. Certain derived functors in the sense of Dold-Puppe \cite{DP:1961} turn out  as limits \cite{MP:2016}:
$$
L_1{\sf SP}^2(G_{ab})=\ilimit \frac{\gamma_2(F)}{\gamma_2(R)\gamma_3(F)},\quad L_1{\sf SP}^3(G_{ab})=\ilimit \frac{\gamma_3(F)}{[\gamma_2(R),\,F]\gamma_4(F)},
$$
where $L_1{\sf SP}^2$ and $L_1{\sf SP}^3$ are the first derived functors of the symmetric square and cube functor respectively  and $G_{ab}:=G/\gamma_2(G)$. The description of derived functors $L_1{\sf SP}^n(G_{ab})$ as limits for $n\geq 4$ is given in \cite{MP:2017}.
 An application of the theory of limits to cyclic homology is given in \cite{Quillen:1989} where it is shown that the cyclic homology of algebras can be defined as limits over the category of free presentations of  certain simply defined functors. We work in the same direction, but consider the category of groups. Our approach brings out a fresh context for the study of  dimension subgroups and Fox subgroups.
\par\vspace{.25cm}To describe the main results of the present  work,
let $F$ be a free group, $R$ a  normal subgroup of $F$, and $G=F/R$. Then there is a natural  short exact sequence
$$
\frac{R\cap \gamma_2(F)}{\gamma_2(R)(R\cap \gamma_4(F))}\hookrightarrow \frac{\gamma_2(F)}{\gamma_2(R)\gamma_4(F)}\twoheadrightarrow \frac{\gamma_2(G)}{\gamma_4(G)}.
$$Observe that the first two terms can be viewed as functors from the category of free presentations $$R\hookrightarrow F\twoheadrightarrow G$$ of $G$ to the category of abelian groups.
The limit functor $\ilimit$ is known to be left \linebreak exact, i.e., it sends monomorphisms to monomorphisms, however, it is not right \linebreak exact, and therefore short exact  sequences of presentations induce long exact\linebreak  sequences involving higher  derived $\ilimit^i$-terms. For instance, the above short \linebreak sequence
induces the following long exact sequence
$$
\ilimit \frac{R\cap \gamma_2(F)}{\gamma_2(R)(R\cap \gamma_4(F))}\hookrightarrow \ilimit \frac{\gamma_2(F)}{\gamma_2(R)\gamma_4(F)}\to \frac{\gamma_2(G)}{\gamma_4(G)}\to \ilimit^1\frac{R\cap \gamma_2(F)}{\gamma_2(R)(R\cap \gamma_4(F))}.
$$
Our main result on dimension quotients describes the cokernel of the left monomorphism in the above exact  sequence.  To be precise, we have
\par\vspace{.25cm}
\begin{theorem}\label{thdim}
There is a natural short exact sequence
$$
\ilimit \frac{R\cap \gamma_2(F)}{\gamma_2(R)(R\cap \gamma_4(F))}\hookrightarrow \ilimit \frac{\gamma_2(F)}{\gamma_2(R)\gamma_4(F)}\twoheadrightarrow \frac{D_4(G)}{\gamma_4(G)}.
$$
\end{theorem}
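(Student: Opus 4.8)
The plan is to reduce the statement to an identity of images and then analyse that image by passing to a ``universal'' free presentation and using Fox calculus. First I would check that the functor $K\colon P\mapsto \dfrac{R\cap\gamma_2(F)}{\gamma_2(R)(R\cap\gamma_4(F))}$ is exactly the kernel of the natural epimorphism of functors from $M\colon P\mapsto \dfrac{\gamma_2(F)}{\gamma_2(R)\gamma_4(F)}$ onto the constant functor $\gamma_2(G)/\gamma_4(G)$: the kernel is $\dfrac{(R\cap\gamma_2(F))\gamma_2(R)\gamma_4(F)}{\gamma_2(R)\gamma_4(F)}\cong \dfrac{R\cap\gamma_2(F)}{(R\cap\gamma_2(F))\cap\gamma_2(R)\gamma_4(F)}$, and a direct check (an element $sc\in R$ with $s\in\gamma_2(R)$ and $c\in\gamma_4(F)$ forces $c\in R\cap\gamma_4(F)$) gives $(R\cap\gamma_2(F))\cap\gamma_2(R)\gamma_4(F)=\gamma_2(R)(R\cap\gamma_4(F))$. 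Since $\ilimit$ is left exact, the short exact sequence of functors $K\hookrightarrow M\twoheadrightarrow \gamma_2(G)/\gamma_4(G)$ gives an exact sequence
$$
\ilimit K \hookrightarrow \ilimit M \xrightarrow{\pi} \frac{\gamma_2(G)}{\gamma_4(G)},
$$
so Theorem~\ref{thdim} is equivalent to $\operatorname{im}\pi = D_4(G)/\gamma_4(G)$ inside $\gamma_2(G)/\gamma_4(G)$. Moreover, writing $\mathbb Z[G]=\mathbb Z[F]/\mathfrak r\mathbb Z[F]$ and using the classical equalities $D_2(G)=\gamma_2(G)$, $D_3(G)=\gamma_3(G)$, one identifies
$$
\frac{D_4(G)}{\gamma_4(G)} \;\cong\; \frac{\gamma_2(F)\cap(1+\mathfrak f^4+\mathfrak r\mathbb Z[F])}{(R\cap\gamma_2(F))\,\gamma_4(F)}\,,
$$
a subgroup of $\gamma_2(G)/\gamma_4(G)=\gamma_2(F)/(R\cap\gamma_2(F))\gamma_4(F)$; hence it suffices to show that a class $w\,\gamma_4(G)$ lies in $\operatorname{im}\pi$ exactly when some lift $\widetilde w\in\gamma_2(F)$ satisfies $\widetilde w-1\in\mathfrak f^4+\mathfrak r\mathbb Z[F]$.

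For the inclusion $\operatorname{im}\pi\subseteq D_4(G)/\gamma_4(G)$ I would fix a presentation $P_0=(R_0\hookrightarrow F_0\twoheadrightarrow G)$ which is weakly initial in the category of free presentations of $G$ (it admits a morphism to every presentation, e.g.\ $F_0$ free on a set mapping onto $G$ with infinite fibres). Then the projection $\ilimit M\to M_{P_0}$ is injective and its image consists of classes fixed by every endomorphism $\theta$ of $P_0$ over $G$. Such a $\theta$ has the form $x_k\mapsto x_k\rho_k$ with $\rho_k\in R_0$, and for a lift $v\in\gamma_2(F_0)$ of $w$ the Fox fundamental formula and the chain rule give the exact identity
$$
\theta(v)-v \;=\; \sum_k \theta\!\left(\tfrac{\partial v}{\partial x_k}\right) x_k(\rho_k-1)\;+\;\sum_k\Bigl[\theta\!\left(\tfrac{\partial v}{\partial x_k}\right)-\tfrac{\partial v}{\partial x_k}\Bigr](x_k-1),
$$
with $\theta(v)v^{-1}\in R_0\cap\gamma_2(F_0)$. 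Imposing $\theta(v)v^{-1}\in\gamma_2(R_0)\gamma_4(F_0)$ for all $\theta$, expanding in the $(\rho_k-1)$'s, and using $\mathfrak f_0^3\,\mathfrak r_0\mathbb Z[F_0]\subseteq\mathfrak f_0^4$ together with the Fox/relation-module description of $\mathfrak r_0\mathbb Z[F_0]$ (and the triviality of $D_3$) then forces $v-1\in\mathfrak f_0^4+\mathfrak r_0\mathbb Z[F_0]$, i.e.\ $w\in D_4(G)$.

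For the reverse inclusion $D_4(G)/\gamma_4(G)\subseteq\operatorname{im}\pi$, given $d\in D_4(G)$ one must produce a compatible family of lifts $\widetilde d_P\in\gamma_2(F)$ of $d$, one for each presentation $P$; equivalently, one shows that the connecting class $\delta(d\,\gamma_4(G))\in\ilimit^1 K$ coming from the long exact sequence of $K\hookrightarrow M\twoheadrightarrow\gamma_2(G)/\gamma_4(G)$ vanishes. Here one exploits that $d-1\in\mathfrak f^4+\mathfrak r\mathbb Z[F]$ holds for \emph{every} presentation, that a morphism $\phi\colon P\to P'$ over $G$ restricts to $R\to R'$, and the same Fox-calculus identity as above, to show that lifts obtained from a single $\mathbb Z[F]$-level expression witnessing $d-1\in\mathfrak f^4+\mathfrak r\mathbb Z[F]$ (pushed forward from a weakly initial presentation) have pairwise discrepancies lying in $\gamma_2(R')\gamma_4(F')$. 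Together with the previous step this gives $\operatorname{im}\pi=D_4(G)/\gamma_4(G)$ and hence the theorem.

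The substance of the argument is the two Fox-calculus computations, and the second (constructive) one is the harder. One must control the higher-order terms in the $(\rho_k-1)$'s; one must deal with the fact that a naive lift $v$ of an element of $D_4(G)$ need \emph{not} be endomorphism-invariant, so the representative has to be optimised inside its $K_{P_0}$--coset; and one must pin down precisely which powers of $\mathfrak f_0$ and $\mathfrak r_0\mathbb Z[F_0]$ occur, so that the invariance and coherence conditions cut out exactly the fourth dimension subgroup rather than something off by one filtration degree. This is the point at which the classical structure theory of $D_4$ (Rips, Tahara, Gupta) and the Fox-subgroup identifications recalled in the introduction must be invoked to match the two sides.
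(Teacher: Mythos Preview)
Your reduction in the first paragraph is correct and coincides with the paper's starting point: left exactness of $\ilimit$ applied to $K\hookrightarrow M\twoheadrightarrow \gamma_2(G)/\gamma_4(G)$ reduces the theorem to $\operatorname{im}\pi=D_4(G)/\gamma_4(G)$.

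From there, however, your route diverges from the paper's and contains genuine gaps. The paper does \emph{not} attack the two inclusions by elementwise Fox calculus on a weakly initial presentation. Instead it introduces the intermediate functors
\[
P\;\longmapsto\;\frac{D(4,\mathfrak{f}\mathfrak{r})}{\gamma_2(R)\gamma_4(F)}
\qquad\text{and}\qquad
P\;\longmapsto\;\frac{R\cap D(4,\mathfrak{f}\mathfrak{r})}{\gamma_2(R)\gamma_4(F)},
\]
whose cokernel is exactly $D_4(G)/\gamma_4(G)$, and observes that the quotient $\mathfrak{f}^2/(\mathfrak{f}\mathfrak{r}+\mathfrak{f}^4)\cong \tfrac{\mathfrak f}{\mathfrak f^3}\otimes_{\mathbb Z[G]}\tfrac{\mathfrak g}{\mathfrak g^3}$ is monoadditive, so $\ilimit$ of the original functors coincides with $\ilimit$ of these dimension-quotient functors. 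The crucial step is then to prove, via the generalized dimension subgroup computations of Section~3 (Lemmas~\ref{fg1}--\ref{2lemma} and the identification $D(3,\mathfrak{f}\mathfrak{r})/\gamma_2(R)\gamma_3(F)=L_1\mathsf{SP}^2(G_{ab})$), that $D(4,\mathfrak{f}\mathfrak{r})/\gamma_2(R)\gamma_4(F)$ is a $G$-\emph{representation}, i.e.\ already constant on the presentation category. Surjectivity onto $D_4(G)/\gamma_4(G)$ then falls out formally, and the inclusion $\operatorname{im}\pi\subseteq D_4(G)/\gamma_4(G)$ is immediate from the diagram.

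Your argument has two concrete problems. First, the passage from ``$x\in\ilimit M$'' to ``$x$ is fixed by every endomorphism of a weakly initial $P_0$'' is a necessary condition, but you use it as a sufficient one; the limit is characterized by agreement under \emph{all pairs} $P_0\rightrightarrows P'$ (equivalently by the equalizer $M(P_0)\rightrightarrows M(P_0\sqcup P_0)$), not merely by endomorphism invariance, and you have not bridged that gap. Second, and more seriously, your surjectivity direction is only a sketch that explicitly defers to ``the classical structure theory of $D_4$ (Rips, Tahara, Gupta)''. Invoking the known description of $D_4/\gamma_4$ here is essentially circular: the point of the theorem is to give an independent, functorial description of this quotient, and the paper's proof achieves this without appealing to Tahara's or Gupta's results. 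What you are missing is precisely the recognition that the relevant generalized dimension quotient is a $G$-representation; once you have that, no compatible-lift construction or $\ilimit^1$ chase is needed.
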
\par\vspace{.25cm}\noindent
Thus we present a description of the  fourth dimension quotient purely in  functorial terms (not involving the group ring), which to us, is a very surprising result.   \par\vspace{.5cm}
We next  give a  functorial description of the quotient $F(3,\,R)/G(3,\,R)$,  together with a complete identification (not involving an isolator),  of the third Fox subgroup $F(3,\,R)$.
\par\vspace{.25cm}
\begin{theorem}\label{fox} Let $F$ be a free group and $R$ a normal subgroup of $F$. \par\vspace{.25cm}\noindent
(a) There is a natural isomorphism
$$
\frac{F(3,\,R)}{G(3,\,R)}\simeq L_1{\sf SP}^2\left(\frac{R\cap \gamma_2(F)}{\gamma_2(R)(R\cap \gamma_3(F))}\right).
$$\\
(b) \quad\quad\quad\quad\quad\quad\quad$
F(3,\,R)=G(3,\,R)W$,\\
where $W$ is a subgroup of $F$, generated by elements
$$
[x,\,y]^m[x,\,s_y]^{-1}[y,\,s_x]
$$
with
\begin{align*}
& x^m=r_xs_x,\ r_x\in R\cap \gamma_3(F),\ s_x\in \gamma_2(R)\\
& y^m=r_ys_y,\ r_y\in R\cap \gamma_3(F),\ s_y\in \gamma_2(R).
\end{align*}
\end{theorem}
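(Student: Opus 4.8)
The proof builds on the Yunus--Gupta identification $F(3,R)=\sqrt{G(3,R)}$ recalled above. Since $\mathfrak r\mathfrak f^3\subseteq\mathfrak f^4$ and $F$ is free, $F(3,R)\subseteq F\cap(1+\mathfrak f^4)=\gamma_4(F)$, while $F(3,R)\subseteq F\cap(1+\mathfrak r)=R$ is immediate; a short check gives $G(3,R)\subseteq R\cap\gamma_4(F)$ as well. Because $\gamma_4(R)\subseteq G(3,R)$, the group $R/G(3,R)$ is nilpotent of class $\le 3$, so its isolator $\sqrt{G(3,R)}/G(3,R)$ equals the torsion subgroup of $R/G(3,R)$; putting this together, $F(3,R)/G(3,R)$ is precisely the torsion subgroup of the abelian group $(R\cap\gamma_4(F))/G(3,R)$, and part (a) becomes the computation of this torsion subgroup. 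A second basic point: $M:=R\cap\gamma_2(F)$ is a subgroup of a free group, hence free, so $M^{\mathrm{ab}}$ is free abelian and
$$0\to\bar K\to M^{\mathrm{ab}}\to\mathcal A\to 0,\qquad \mathcal A=\frac{R\cap\gamma_2(F)}{\gamma_2(R)(R\cap\gamma_3(F))},$$
with $\bar K$ the image of $\gamma_2(R)(R\cap\gamma_3(F))$ in $M^{\mathrm{ab}}$, is a free resolution of $\mathcal A$; hence, by the Dold--Puppe description of derived functors, $L_1{\sf SP}^2(\mathcal A)$ is the first homology of $\Lambda^2\bar K\xrightarrow{d_2}M^{\mathrm{ab}}\otimes\bar K\xrightarrow{d_1}{\sf SP}^2(M^{\mathrm{ab}})$, where $d_1(u\otimes k)=u\cdot k$ and $d_2(k\wedge k')=k\otimes k'-k'\otimes k$.

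For (a) the plan is to identify this homology group with the torsion of $(R\cap\gamma_4(F))/G(3,R)$ by commutator calculus. Since $\gamma_3(M)=[[M,M],M]\subseteq[[R\cap\gamma_2(F),R],R\cap\gamma_2(F)]\subseteq G(3,R)$, the subgroup $\mathcal B:=\gamma_2(M)G(3,R)/G(3,R)$ is abelian and a quotient of $\gamma_2(M)/\gamma_3(M)\cong\Lambda^2(M^{\mathrm{ab}})$; one filters $(R\cap\gamma_4(F))/G(3,R)$ by $\mathcal B$, with complementary layer governed by the (torsion-controlled) free Lie quotients $\gamma_k(F)/\gamma_{k+1}(F)$, $k\ge 4$. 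Using the Hall--Witt identity and the precise form of $G(3,R)$, one writes $\mathcal B$ as a subquotient of $\Lambda^2(M^{\mathrm{ab}})$ determined by $\mathcal A$ and $\bar K$; the commutator pairing together with the identity $[u,k]^t\equiv[u^t,k]\pmod{\gamma_3(M)}$ then links the torsion of $\mathcal B$ to the complex above, and the asserted outcome is a natural isomorphism
$$L_1{\sf SP}^2(\mathcal A)=\ker d_1/\operatorname{im}d_2\;\xrightarrow{\ \sim\ }\;\frac{F(3,R)}{G(3,R)}.$$
The passage from the antisymmetric bracket (which by itself produces $\Lambda^2$) to the symmetric square ${\sf SP}^2$ is a feature precisely of the torsion, reflecting the interaction of squaring with bracketing.

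The step I expect to be the main obstacle is establishing that this map is bijective: surjectivity requires that every torsion element of $(R\cap\gamma_4(F))/G(3,R)$ be represented by a commutator of the special form above, i.e.\ that the torsion contributed by the free Lie layers be entirely absorbed; injectivity requires that the only relations among these commutators modulo $G(3,R)$ be those in $\operatorname{im}d_2$. Both force one to use the membership condition $w\in 1+\mathfrak r\mathfrak f^3$ in full rather than merely its isolator consequence --- in effect, to refine Gupta's Fox-derivative and Magnus-embedding analysis of $F(3,R)$ so as to see that $F(3,R)/G(3,R)$ depends functorially only on the abelian group $\mathcal A$, and to match this dependence with that of $L_1{\sf SP}^2$ (one may, for instance, compare the two functors on presentations with $\mathcal A$ of rank $\le 2$ and conclude by the additivity below).

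Part (b) is deduced from (a). The inclusion $G(3,R)W\subseteq F(3,R)$ reduces, since $G(3,R)\subseteq\sqrt{G(3,R)}=F(3,R)$, to showing each generator $w=[x,y]^m[x,s_y]^{-1}[y,s_x]$ of $W$ lies in $1+\mathfrak r\mathfrak f^3$; with $x^m=r_xs_x$, $y^m=r_ys_y$ this is a direct (if delicate) computation in $\mathbb Z[F]$ from $\gamma_k(F)-1\subseteq\mathfrak f^k$, $R-1\subseteq\mathfrak r$, $\gamma_2(R)-1\subseteq\mathfrak r^2$, the expansion of $[x,y]^m$ in terms of $[x^m,y]$ modulo higher commutators, and the drop in $\mathfrak f$-filtration of the commutators $[\mathfrak f^k,\mathfrak r]$: the correction factors $[x,s_y]^{-1}$ and $[y,s_x]$ cancel the contributions of $s_x,s_y$, leaving an element of $\mathfrak r\mathfrak f^3$ (the $r_x,r_y\in R\cap\gamma_3(F)$ supplying simultaneously the $\mathfrak r$ and the $\mathfrak f^3$). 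For the reverse inclusion $F(3,R)\subseteq G(3,R)W$ one invokes (a): under the isomorphism $F(3,R)/G(3,R)\cong L_1{\sf SP}^2(\mathcal A)$ the class of $w(x,y)$, for $x,y\in M$ with $x^m,y^m\in\gamma_2(R)(R\cap\gamma_3(F))$, corresponds to the element of $L_1{\sf SP}^2(\mathcal A)$ attached to the pair of torsion classes $\bar x,\bar y\in\mathcal A$. Since $L_1{\sf SP}^2$ vanishes on torsion-free and on cyclic groups and satisfies $L_1{\sf SP}^2(A\oplus B)\cong L_1{\sf SP}^2(A)\oplus\operatorname{Tor}(A,B)\oplus L_1{\sf SP}^2(B)$ --- so $L_1{\sf SP}^2(\mathcal A)$ is generated by the $\operatorname{Tor}$-contributions of pairs of cyclic subgroups --- and since $(x,y)\mapsto w(x,y)\bmod G(3,R)$ is biadditive and alternating in $\bar x,\bar y$, suitably chosen such elements generate $WG(3,R)/G(3,R)=F(3,R)/G(3,R)$, completing the proof.
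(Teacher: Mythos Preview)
Your setup is sound: the reduction of $F(3,R)/G(3,R)$ to the torsion of $(R\cap\gamma_4(F))/G(3,R)$ via the isolator identification is correct, and your free resolution of $\mathcal A$ through $M^{\mathrm{ab}}$ gives a valid Koszul model for $L_1{\sf SP}^2(\mathcal A)$. Your treatment of part (b) from (a) is also essentially what the paper does, including the explicit verification that $w=[x,y]^m[x,s_y]^{-1}[y,s_x]\in 1+\mathfrak r\mathfrak f^3$.

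However, the step you flag as ``the main obstacle'' --- the bijectivity of your map between $\ker d_1/\operatorname{im}d_2$ and the torsion of $(R\cap\gamma_4(F))/G(3,R)$ --- is the entire content of part (a), and your proposal does not actually carry it out. Appealing to a refinement of Gupta's Fox-derivative analysis, or to a comparison on rank-$\le 2$ presentations plus additivity, is not a proof; in particular the ``functorial dependence only on $\mathcal A$'' is precisely what is at stake, and you have not established it. The isolator result $F(3,R)=\sqrt{G(3,R)}$ tells you the quotient is torsion but gives no handle on \emph{which} torsion group it is.

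The paper takes a different route that avoids this gap. Rather than working inside $(R\cap\gamma_4(F))/G(3,R)$, it builds a commutative diagram mapping $G(2,R)/G(3,R)$ into the group-ring quotient $\mathfrak r\mathfrak f^2/\mathfrak r\mathfrak f^3$, with left column controlled by $\gamma_3(R)$ and right column by $\gamma_2(R\cap\gamma_2(F))$. Setting $I=(R\cap\gamma_2(F))/\gamma_2(R)$ and $J=(R\cap\gamma_3(F))/(\gamma_2(R)\cap\gamma_3(F))$, the right column is identified with $\Lambda^2(I)/\Lambda^2(J)$, and the K\"ock exact sequence $L_1{\sf SP}^2(I/J)\hookrightarrow\Lambda^2(I)/\Lambda^2(J)\to I\otimes I/J$ (the paper's (2.4)) identifies the kernel of the map to $\mathfrak r\mathfrak f^2/(\mathfrak r\mathfrak f^3+\mathfrak r^3)$ as exactly $L_1{\sf SP}^2(I/J)=L_1{\sf SP}^2(\mathcal A)$. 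Injectivity on the $\gamma_3(R)$-column comes from two independent group-ring inputs: the intersection formula $\mathfrak r^2\cap\mathfrak f^3=\mathfrak r^3+\mathfrak r\Delta(R\cap\gamma_2(F))+\Delta(\gamma_2(R)\cap\gamma_3(F))$ of Karan--Kumar--Vermani, and the identification $D(4,\mathfrak{frf}+\mathfrak f^2\mathfrak r)=[[R,F],R]\gamma_4(F)$ when $F/R\gamma_2(F)$ is torsion-free (Corollary~\ref{corfrf}). Surjectivity of the map $q$ onto $L_1{\sf SP}^2(I/J)$ is shown by the explicit elements $w$ you also wrote down. These two group-ring computations are what replace your missing bijectivity argument; the isolator theorem is never invoked.
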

\par\vspace{.25cm}
Finally, we give a description of $\ilimit \frac{F(3,\,R)}{G(3,\,R)}$, where it may be noted that  composition of derived functors appears.\par\vspace{.25cm}
\begin{theorem}\label{foxlimit}
There is a natural isomorphism
$$
\ilimit \frac{F(3,R)}{G(3,R)}\simeq L_1{\sf SP}^2\left(L_1{\sf SP}^2(G_{ab})\right).
$$
In particular, there a monomorphism
$$
L_1{\sf SP}^2\left(L_1{\sf SP}^2(G_{ab})\right)\hookrightarrow \frac{F(3,R)}{G(3,R)}.
$$
\end{theorem}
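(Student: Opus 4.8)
The natural strategy is to bootstrap from Theorem~\ref{fox}(a), which provides a natural isomorphism of functors on the category of free presentations of $G$,
$$
\frac{F(3,R)}{G(3,R)}\;\simeq\;L_1{\sf SP}^2(P),\qquad P\;:=\;\frac{R\cap\gamma_2(F)}{\gamma_2(R)(R\cap\gamma_3(F))}.
$$
Applying $\ilimit$, Theorem~\ref{foxlimit} becomes the assertion $\ilimit L_1{\sf SP}^2(P)\simeq L_1{\sf SP}^2\bigl(L_1{\sf SP}^2(G_{ab})\bigr)$, and I would break this into two parts: (i) identify $\ilimit P$ with $L_1{\sf SP}^2(G_{ab})$; and (ii) show that $\ilimit$ can be pushed inside the Dold--Puppe functor, $\ilimit L_1{\sf SP}^2(P)\simeq L_1{\sf SP}^2(\ilimit P)$.

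For (i), I would start from the natural short exact sequence of functors
$$
P\;\hookrightarrow\;Q\;\twoheadrightarrow\;\frac{\gamma_2(G)}{\gamma_3(G)},\qquad Q\;:=\;\frac{\gamma_2(F)}{\gamma_2(R)\gamma_3(F)},
$$
which comes from the elementary identity $\bigl(R\cap\gamma_2(F)\bigr)\cap\gamma_2(R)\gamma_3(F)=\gamma_2(R)(R\cap\gamma_3(F))$. Since the cokernel $\gamma_2(G)/\gamma_3(G)$ is a constant functor and $\ilimit$ is left exact, one gets $\ilimit P=\ker\!\bigl(\ilimit Q\to\gamma_2(G)/\gamma_3(G)\bigr)$, and $\ilimit Q=L_1{\sf SP}^2(G_{ab})$ by \cite{MP:2016}. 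It then remains to check that the resulting natural map $L_1{\sf SP}^2(G_{ab})\to\gamma_2(G)/\gamma_3(G)$ is zero. I would argue this either formally --- $\gamma_2(G)/\gamma_3(G)$ is a quotient of $\Lambda^2(G_{ab})$, a polynomial functor of degree $2$, while $L_1{\sf SP}^2$ is the first Dold--Puppe derived functor of a functor of the same degree, leaving no room for a nonzero natural transformation --- or by a direct verification on a cofinal family of presentations.

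The substance of the proof, and the step I expect to be the main obstacle, is (ii): interchanging $\ilimit$ with $L_1{\sf SP}^2$. The plan is to construct, over a suitable cofinal subcategory of free presentations, a functorial two-term resolution $0\to\mathcal{L}_1\to\mathcal{L}_0\to P\to 0$ by functors taking free abelian values --- for instance using the inclusion $\bigl(R\cap\gamma_2(F)\bigr)/\bigl(R\cap\gamma_3(F)\bigr)\hookrightarrow\Lambda^2(F_{ab})$ and the evident surjection of the former onto $P$ --- and then to invoke the Dold--Puppe description of $L_1{\sf SP}^2$ relative to such a resolution: since ${\sf SP}^2$ is quadratic with cross-effect $\otimes$, $L_1{\sf SP}^2(P)$ is the degree-one homology of a complex built from ${\sf SP}^2$, $\otimes$ and a $\mathrm{Tor}$-term applied to $\mathcal{L}_0$ and $\mathcal{L}_1$. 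Because $\ilimit$ is compatible with these additive constructions on the pro-free terms $\mathcal{L}_0,\mathcal{L}_1$, it commutes with the formation of this homology and (ii) follows. Controlling the limit behaviour of $\mathcal{L}_0,\mathcal{L}_1$ and of the $\mathrm{Tor}$-term --- which is exactly where the fact that we work over the category of free presentations, rather than an arbitrary index category, enters --- is the delicate point; this is the same flavour of interchange used in \cite{MP:2016,MP:2017}, and I would either appeal to a result of that type or establish the special case needed here. Combining (i) and (ii),
$$
\ilimit\frac{F(3,R)}{G(3,R)}\;\simeq\;\ilimit L_1{\sf SP}^2(P)\;\simeq\;L_1{\sf SP}^2(\ilimit P)\;\simeq\;L_1{\sf SP}^2\bigl(L_1{\sf SP}^2(G_{ab})\bigr).
$$

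Finally, the ``in particular'' clause is formal. Over the category of free presentations of $G$ the canonical map $\ilimit\mathcal{F}\to\mathcal{F}(\rho)$ is a monomorphism for every presentation $\rho$: between any two free presentations of $G$ there is a morphism, so an element of the limit whose component at $\rho$ vanishes has all components zero. Applying this to $\mathcal{F}=L_1{\sf SP}^2(P)$ and combining with Theorem~\ref{fox}(a) at a single presentation $\rho$ yields the monomorphism $L_1{\sf SP}^2\bigl(L_1{\sf SP}^2(G_{ab})\bigr)\hookrightarrow F(3,R)/G(3,R)$.
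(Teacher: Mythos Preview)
Your reduction via Theorem~\ref{fox}(a) and step~(i) are fine; indeed the paper establishes the same short exact sequence $0\to L_1{\sf SP}^2(G_{ab})\to P\to L\to 0$ (with $L$ a concrete subquotient of $\mathfrak f^2/(\mathfrak{fr}+\mathfrak f^3)$) via the diagram~(\ref{diaw}), and this gives $\ilimit P\simeq L_1{\sf SP}^2(G_{ab})$ immediately. The ``in particular'' clause is also correctly handled.

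The gap is in step~(ii). The interchange $\ilimit L_1{\sf SP}^2(P)\simeq L_1{\sf SP}^2(\ilimit P)$ is not formal, and your proposed route via a functorial free resolution $0\to\mathcal L_1\to\mathcal L_0\to P\to 0$ runs into a concrete obstruction: for the natural choice $\mathcal L_0=(R\cap\gamma_2(F))/(R\cap\gamma_3(F))$, which embeds in the mono-additive functor $\Lambda^2(F_{ab})$, one has $\ilimit\mathcal L_0=0$ (and likewise $\ilimit\mathcal L_1=0$). Thus the ``limit of the resolution'' is the zero complex, while $\ilimit P\neq 0$; all the information has been pushed into $\ilimit^1$-terms, and your assertion that ``$\ilimit$ is compatible with these additive constructions on the pro-free terms'' has no content here. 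Applying~(\ref{l1sp}) to your resolution and taking $\ilimit$ only re-expresses $\ilimit L_1{\sf SP}^2(P)$ as the kernel of $\ilimit\bigl(\Lambda^2(\mathcal L_0)/\Lambda^2(\mathcal L_1)\bigr)\to\ilimit(\mathcal L_0\otimes P)$, which is circular.

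The paper proceeds quite differently. Rather than resolving $P$, it uses the inclusion $L_1{\sf SP}^2(G_{ab})\hookrightarrow P$ directly: this induces monomorphisms on ${\sf Tor}$, on $L_1\Lambda^2$, on $L_1\tilde\otimes^2$ (via~(\ref{l1to})), and hence on $L_1{\sf SP}^2$; then the six-term sequence~(\ref{les}) yields an embedding
\[
\frac{L_1{\sf SP}^2(P)}{L_1{\sf SP}^2\bigl(L_1{\sf SP}^2(G_{ab})\bigr)}\;\hookrightarrow\;\frac{{\sf Tor}(P,P)}{{\sf Tor}\bigl(L_1{\sf SP}^2(G_{ab}),L_1{\sf SP}^2(G_{ab})\bigr)}.
\]
The right-hand side has vanishing $\ilimit$ because the cokernel $L$ of $L_1{\sf SP}^2(G_{ab})\hookrightarrow P$ embeds in $G_{ab}\otimes F_{ab}$, so ${\sf Tor}(L_1{\sf SP}^2(G_{ab}),L)$ and ${\sf Tor}(L,P)$ embed in mono-additive functors of the form $\mathcal F\otimes F_{ab}$. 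This mono-additivity argument for the \emph{cokernel} $L$---not a free resolution of $P$---is the mechanism that makes the interchange go through, and it is precisely what your sketch is missing.
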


\par\vspace{.25cm}
The paper is organized as follows. In Section 2 - {\it Preliminaries}- we recall the basic properties and results concerning limits of functors and derived functors needed in the sequel. Section 3 - {\it Generalized Dimension Quotients} - is devoted to proving several Lemmas on subgroups determined by two-sided ideals in free group rings.  Theorems 1, 2 and 3 are  proved in Sections 4, 5 and 6 respectively. We refer the reader to \cite{IM:2014} for more details about limits, and to  (\cite{Gupta:1987}, \cite{MP:2009}) for  the identification of normal subgroups determined by  two-sided idelas in group rings.

\par\vspace{.5cm}
\section{Preliminaries}
\subsection{Elementary properties of limits}

We begin by recalling  the  basic facts about limits of functors. Let $\mathcal C,\, \mathcal D$ be arbitrary categories, and let ${\sf Ab}$ be the category of abelian groups. Recall (\cite{Mac_Lane}, Chapter V) that the limit  $\ilimit\ \mathcal F$ of a functor $\mathcal F:\mathcal C\to \mathcal D$ is an object of $\mathcal D$ together with a universal collection of morphisms $$\{\varphi_c:\ilimit\ \mathcal F \to \mathcal F(c)\}_{c\in \mathcal C}\ \text{such that}\  \mathcal F(f) \varphi_c=\varphi_{c'}\text{ for any morphism}\ f:c\to c'.$$ Universality means that for any object $d\in \mathcal D$ and any collection of morphisms $\{\psi_c:d\to \mathcal F(c)\}_{c\in \mathcal C}$ such that $\mathcal F(f) \psi_c=\psi_{c'}$ for any morphism $f:c\to c'$ there exists a unique morphism $\alpha:d\to \ilimit\ \mathcal F$ such that $\psi_c=\varphi_c\alpha.$ The limit of a functor does  not always exist; however,  if it exists, then it is unique up to unique isomorphism that commutes with morphisms $\varphi_c.$

\par\vspace{.25cm}
A category $\mathcal C$ is said to be {\it strongly connected} if for any objects $c,\,c'\in \mathcal C$ the hom-set ${\sf Hom}_{\mathcal C}(c,\,c')$ is non-empty. A standard argument implies that, for a strongly connected category $\mathcal C$ and a functor $\mathcal F: \mathcal C\to {\sf Ab},$ for any object $c\in \mathcal C$, there is a natural embedding
$$\alpha_c:
\ilimit\ \mathcal F\hookrightarrow \mathcal F(c).
$$
 More precisely, $\ilimit\ \mathcal F$ can be identified as follows:
\begin{multline*}\ilimit\ \mathcal F=\{x\in \mathcal F(c)\mid \text{for all}\ c'\in \mathcal C, \ \text{and morphisms}\ f_1,\,f_2:c\to c',\\ \mathcal F(f_1)(x)=\mathcal F(f_2)(x)\}.\end{multline*}
\par\vspace{.25cm}
A {\it category  with pairwise coproducts} is a category $\mathcal C$ such that
for any objects  $c_1,\,c_2\in \mathcal C$ there exists the coproduct $c_1 \buildrel{i_1}\over{\longrightarrow} c_1\sqcup c_2 \buildrel{i_2}\over{\longleftarrow} c_2$ in $\mathcal C$. For a category with pairwise coproducts $\mathcal C$, objects $c_1,\,c_2\in \mathcal C$, and a functor $\mathcal F: \mathcal C\to {\sf Ab}$, there is a natural map
$$
{\sf T}: \mathcal F(c_1)\oplus \mathcal F(c_2)\to \mathcal F(c_1\sqcup c_2).
$$
A functor $\mathcal F$ is called {\it monoadditive}, if $\sf T$ is injective for any pair of objects $c_1,\,c_2\in \mathcal C$.
\par\vspace{.25cm}
The following lemma is due to S. O. Ivanov. It gives a way to define the limit of a functor as an equalizer. This lemma will not be used in the proofs of main statements of the paper.\par\vspace{.25cm}
\begin{lemma}\label{equalizer} Let $\mathcal C$ be a strongly connected category with pairwise coproducts and $\mathcal F:\mathcal C\to {\sf Ab}$ be a functor. Then there are  exact sequences
$$0 \longrightarrow \ilimit\ \mathcal F \overset{\alpha_c}\longrightarrow \mathcal F(c) \xrightarrow{\ \mathcal F(i_1) - \mathcal F(i_2) \ } \mathcal F(c\sqcup c),$$
$$0 \longrightarrow \ilimit\ \mathcal F \longrightarrow \mathcal F(c)\oplus \mathcal F(c) \overset{\sf T}\longrightarrow \mathcal F(c\sqcup c).$$ The map $ \ilimit\ \mathcal F \to \mathcal F(c)\oplus \mathcal F(c)$ is given by $x\mapsto (\alpha_c(x),-\alpha_c(x)),$ where\linebreak  $\alpha_c:\ilimit\ \mathcal F \to \mathcal F(c)$ is the structure morphism of $\ilimit\ \mathcal F$.
\end{lemma}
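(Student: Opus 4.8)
The plan is to deduce both exact sequences from the explicit description of $\ilimit\ \mathcal F$ as a subgroup of $\mathcal F(c)$ recalled above, together with the universal property of the coproduct $c\sqcup c$. The conceptual content of the first sequence is that, for a strongly connected category with pairwise coproducts, the limit of $\mathcal F$ is computed as the \emph{equalizer} of the single parallel pair $\mathcal F(i_1),\mathcal F(i_2)\colon \mathcal F(c)\rightrightarrows\mathcal F(c\sqcup c)$; the second sequence is then a purely formal reformulation obtained by folding with the codiagonal.

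First I would establish exactness of
$$0\longrightarrow \ilimit\ \mathcal F\xrightarrow{\ \alpha_c\ }\mathcal F(c)\xrightarrow{\ \mathcal F(i_1)-\mathcal F(i_2)\ }\mathcal F(c\sqcup c).$$
Injectivity of $\alpha_c$ is precisely the natural embedding recalled above. For exactness at $\mathcal F(c)$, the inclusion $\operatorname{im}\alpha_c\subseteq\ker(\mathcal F(i_1)-\mathcal F(i_2))$ is immediate: in the description of $\ilimit\ \mathcal F$ take $c'=c\sqcup c$ and the morphisms $i_1,i_2\colon c\to c\sqcup c$, so that $\mathcal F(i_1)(x)=\mathcal F(i_2)(x)$ for every $x$ in the limit. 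Conversely, suppose $x\in\mathcal F(c)$ satisfies $\mathcal F(i_1)(x)=\mathcal F(i_2)(x)$. Given any $c'$ and any two morphisms $f_1,f_2\colon c\to c'$, the universal property of $c\sqcup c$ supplies the unique morphism $g\colon c\sqcup c\to c'$ with $g i_1=f_1$ and $g i_2=f_2$; applying $\mathcal F$ yields $\mathcal F(f_1)(x)=\mathcal F(g)\mathcal F(i_1)(x)=\mathcal F(g)\mathcal F(i_2)(x)=\mathcal F(f_2)(x)$. Hence $x$ lies in the limit by the explicit description, so $\ker(\mathcal F(i_1)-\mathcal F(i_2))\subseteq\operatorname{im}\alpha_c$.

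Next I would treat the second sequence. Unwinding the definition of the natural map, ${\sf T}(a,b)=\mathcal F(i_1)(a)+\mathcal F(i_2)(b)$ for $(a,b)\in\mathcal F(c)\oplus\mathcal F(c)$, since $\mathcal F(c)\oplus\mathcal F(c)$ is the coproduct of $\mathcal F(c)$ with itself in ${\sf Ab}$ and ${\sf T}$ is induced by $\mathcal F(i_1),\mathcal F(i_2)$. The map $x\mapsto(\alpha_c(x),-\alpha_c(x))$ is injective because $\alpha_c$ is, and ${\sf T}(\alpha_c(x),-\alpha_c(x))=\mathcal F(i_1)(\alpha_c(x))-\mathcal F(i_2)(\alpha_c(x))=0$ by the first sequence, so its image is contained in $\ker{\sf T}$. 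For the reverse inclusion, let $(a,b)\in\ker{\sf T}$ and let $\nabla\colon c\sqcup c\to c$ be the codiagonal, i.e.\ the unique morphism with $\nabla i_1=\nabla i_2=\mathrm{id}_c$ (which exists since $c\sqcup c$ is a coproduct). Applying $\mathcal F(\nabla)$ to $\mathcal F(i_1)(a)+\mathcal F(i_2)(b)=0$ gives $a+b=0$; hence $\mathcal F(i_1)(a)=\mathcal F(i_2)(a)$, so $a\in\ker(\mathcal F(i_1)-\mathcal F(i_2))=\operatorname{im}\alpha_c$ by the first sequence, and $(a,b)=(\alpha_c(a),-\alpha_c(a))$ lies in the image. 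This gives the claimed formula for the map $\ilimit\ \mathcal F\to\mathcal F(c)\oplus\mathcal F(c)$.

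I do not expect a genuine obstacle: the statement is a soft, formal consequence of the universal properties involved, the only points needing a little care being (i) the identification of ${\sf T}$ with $(a,b)\mapsto\mathcal F(i_1)(a)+\mathcal F(i_2)(b)$, and (ii) the use of the codiagonal $\nabla$ to force $b=-a$ in the second sequence. If one prefers not to invoke the explicit description of the limit, the same coproduct argument shows directly that the equalizer $e\colon E\to\mathcal F(c)$ of $\mathcal F(i_1),\mathcal F(i_2)$ in ${\sf Ab}$, equipped with the family of maps $\mathcal F(f)\circ e\colon E\to\mathcal F(c')$ for $f\colon c\to c'$ (well defined by the argument in the second paragraph), satisfies the universal property of $\ilimit\ \mathcal F$; exactness of both displayed sequences then follows at once.
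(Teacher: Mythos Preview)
Your proof is correct and uses the same core ideas as the paper's argument: the explicit description of $\ilimit\ \mathcal F$ as the subgroup of invariant elements, the universal property of $c\sqcup c$ to factor an arbitrary pair $f_1,f_2\colon c\to c'$ through $i_1,i_2$, and the codiagonal $\nabla=({\sf id}_c,{\sf id}_c)$ to force $b=-a$ in $\ker{\sf T}$.

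The only organizational difference is the order: the paper first determines $\ker{\sf T}$ directly (by applying $\mathcal F((\varphi,\psi))$ to an element of $\ker{\sf T}$ for arbitrary $\varphi,\psi$, which simultaneously yields $x=-y$ via $\varphi=\psi={\sf id}_c$ and the invariance condition via general $\varphi,\psi$) and then reads off the first sequence from the factorization $\mathcal F(i_1)-\mathcal F(i_2)={\sf T}\circ\binom{\ 1}{-1}$; you instead establish the first sequence directly and deduce the second from it. Your ordering is arguably a little cleaner, since the equalizer statement is the more natural one and the second sequence is indeed a formal reformulation. A minor notational quibble: in your last line ``$(a,b)=(\alpha_c(a),-\alpha_c(a))$'' you are identifying $a\in\mathcal F(c)$ with its preimage in $\ilimit\ \mathcal F$; it would be slightly cleaner to write $a=\alpha_c(x)$ for some $x$ and conclude $(a,b)=(\alpha_c(x),-\alpha_c(x))$.
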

\begin{proof} Since $\mathcal C$ is strongly connected, the map
$\alpha_c:\ilimit\ \mathcal F\to \mathcal F(c)$ is a monomorphism and its image is equal to the subgroup of $\mathcal C$-invariant elements:
$$\ilimit\ \mathcal F\cong \{ x\in \mathcal F(c)\mid\, \text{for all}\ c', \,\varphi,\,\psi:c\to c' \ \mathcal F(\varphi)(x)=\mathcal F(\psi)(x) \}.$$
We  identify $\ilimit\ \mathcal F$ with this subgroup.\par\vspace{.25cm}
We claim  that ${\rm Ker}({\sf T})=\{(x,\,-x) \mid x\in \ilimit\ \mathcal F\}.$ Consider two arbitrary arrows $\varphi,\,\psi:c\to c'$ and the commutative diagram:
$$\xymatrix{  \mathcal F(c)\ar[r]^{\mathcal F(i_1)- \mathcal F(i_2)} \ar[d]_{\binom{\ 1}{-1}} & \mathcal F(c\sqcup c) \ar@{=}[d] \\
\mathcal F(c)\oplus \mathcal F(c) \ar[r]^{\sf T}\ar[dr]_{(\mathcal F(\varphi ),\, \mathcal F(\psi))} & \mathcal F(c\sqcup c)\ar[d]^{\mathcal F((\varphi,\psi))} \\
 & \mathcal F(c').
}$$
Assume that $(x,\,y)\in {\rm Ker}({\sf T}).$ Then $(x,\,y)\in {\rm Ker}((\mathcal F(\varphi), \mathcal F(\psi))), $ and hence\linebreak  $\mathcal F(\varphi)(x)= -\mathcal F(\psi)(y).$ If we take $\varphi=\psi={\sf id}_c,$ we get $x=-y.$  It follows that for any $c'\in \mathcal C$ and $\varphi,\psi:c\to c'$ we have $\mathcal F(\varphi)(x)=\mathcal F(\psi)(x).$ Thus   $(x,\,y)=(x,\,-x),$ where $x\in \ilimit\ \mathcal F.$
Let  $x\in \ilimit\ \mathcal F.$ If we take $\varphi=i_1:c\to c\sqcup c$ and $\psi=i_2:c\to c\sqcup c,$ we obtain $(x,\,-x)\in {\rm Ker}({\sf T}).$
\par\vspace{.25cm}
Since $\binom{ \ 1}{-1}:\mathcal F(c)\to \mathcal F(c)\oplus \mathcal F(c)$ is a monomorphism and its image contains the kernel of ${\sf T},$ the kernel of $\mathcal F(i_1)-\mathcal F(i_2)$ is equal to $\ilimit \ \mathcal F$ as well.
\end{proof}
\par\vspace{.12cm}
\begin{corollary}
Let $\mathcal C$ be a strongly connected category with pairwise coproducts and let $\mathcal F:\mathcal C\to {\sf Ab}$ be a functor. Then the functor  $\mathcal F$ is monoadditive if and only if $\ilimit\ \mathcal F=0$.
\end{corollary}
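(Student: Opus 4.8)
The plan is to derive both implications from Lemma~\ref{equalizer} together with the identification of $\ilimit\ \mathcal F$ with the subgroup of $\mathcal C$-invariant elements of $\mathcal F(c)$, which is available because $\mathcal C$ is strongly connected.

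First I would treat the implication ``$\mathcal F$ monoadditive $\Rightarrow \ilimit\ \mathcal F=0$''. Pick any object $c\in\mathcal C$. Monoadditivity gives in particular that ${\sf T}:\mathcal F(c)\oplus\mathcal F(c)\to\mathcal F(c\sqcup c)$ is injective. By the second exact sequence of Lemma~\ref{equalizer}, the map $\ilimit\ \mathcal F\to\mathcal F(c)\oplus\mathcal F(c)$, $x\mapsto(\alpha_c(x),-\alpha_c(x))$, has image equal to $\ker({\sf T})=0$, and since $\alpha_c$ is a monomorphism this forces $\ilimit\ \mathcal F=0$. Equivalently, and more directly: if $x\in\ilimit\ \mathcal F$ then $\mathcal F(i_1)(x)=\mathcal F(i_2)(x)$ by invariance, so $(x,-x)\in\ker({\sf T})$, whence $x=0$.

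For the converse, assume $\ilimit\ \mathcal F=0$ and fix an arbitrary pair of objects $c_1,c_2\in\mathcal C$. Let $(x_1,x_2)$ lie in the kernel of ${\sf T}:\mathcal F(c_1)\oplus\mathcal F(c_2)\to\mathcal F(c_1\sqcup c_2)$, that is, $\mathcal F(i_1)(x_1)+\mathcal F(i_2)(x_2)=0$. For every object $c'$ and every pair of morphisms $\varphi:c_1\to c'$, $\psi:c_2\to c'$, applying $\mathcal F$ of the induced morphism $(\varphi,\psi):c_1\sqcup c_2\to c'$ to this relation yields $\mathcal F(\varphi)(x_1)+\mathcal F(\psi)(x_2)=0$. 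Keeping $\psi$ fixed (one such $\psi$ exists by strong connectedness) and letting $\varphi$ run over all morphisms $c_1\to c'$ shows that $\mathcal F(\varphi)(x_1)$ is independent of $\varphi$; hence $x_1$ is a $\mathcal C$-invariant element of $\mathcal F(c_1)$, i.e.\ $x_1\in\ilimit\ \mathcal F=0$. By the symmetric argument $x_2=0$, so ${\sf T}$ is injective for the pair $(c_1,c_2)$; since the pair was arbitrary, $\mathcal F$ is monoadditive.

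Both directions are formal, so I do not expect a substantive obstacle; the one point that deserves attention is that monoadditivity is a hypothesis on \emph{all} pairs $(c_1,c_2)$, whereas Lemma~\ref{equalizer} only addresses the diagonal pair $(c,c)$. Consequently the converse cannot simply quote that lemma: it genuinely uses the universal property of the coproduct $c_1\sqcup c_2$ of two distinct objects, and it uses strong connectedness twice — once to produce the auxiliary morphism $\psi:c_2\to c'$ and once to have the invariant description of $\ilimit\ \mathcal F$ at hand. I would therefore write the converse out from the universal property directly rather than appeal to Lemma~\ref{equalizer}.
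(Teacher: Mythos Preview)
Your argument is correct and is precisely the natural expansion of what the paper leaves implicit: the corollary is stated there without proof, as an immediate consequence of Lemma~\ref{equalizer}. Your observation that the converse direction requires rerunning the argument of that lemma for an arbitrary pair $(c_1,c_2)$ rather than merely quoting the diagonal case $(c,c)$ is a genuine point the paper glosses over, and your treatment of it via the universal property of $c_1\sqcup c_2$ together with strong connectedness is exactly right.
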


Our main concern is the category $\mathcal E$ of free presentations of a given  group $G$. The objects of $\mathcal E$ are surjective homomorphisms $\pi:F\twoheadrightarrow G$ where $F$ is
a free group and  morphisms $f:(\pi_1:F_1\twoheadrightarrow G)\to (\pi_2:F_2\twoheadrightarrow
G)$ are homomorphisms $f:F_1\to F_2$ such that $\pi_1=\pi_2f$. The
category $\mathcal E$ has coproducts given by
$$(\pi_1:F_1\twoheadrightarrow G)\sqcup(\pi_2:F_2\twoheadrightarrow G)=((\pi_1,\pi_2):F_1*F_2\twoheadrightarrow
G).$$
The category $\mathcal E$ is a strongly connected category with pairwise coproducts and therefore, Lemma \ref{equalizer} applies to  functors $\mathcal F: \mathcal E\to {\sf Ab}$. Thus, in particular, the limits $\ilimit\ \mathcal F$ can be identified with corresponding equalizers.
\par\vspace{.25cm}
A representation (i.e., a functor) $\mathcal F(F\twoheadrightarrow G): \mathcal E\to {\sf Ab}$ will be  called a $G$-{\it representation} if, for any $(F\twoheadrightarrow G)\in \mathcal E$, the natural inclusion
$$
\ilimit \mathcal F\hookrightarrow \mathcal F(F\twoheadrightarrow G)
$$
is an isomorphism. That is, a $G$-representation $\mathcal F(F\twoheadrightarrow G)$ depends only on $G$.\par\vspace{.25cm}

The limit $\ilimit$ can be defined as the right adjoint to the diagonal functor. It is left exact, but not right exact. A short exact sequence of representations
$$
\mathcal F_1\hookrightarrow \mathcal F_2\twoheadrightarrow \mathcal F_3
$$
induces a long exact sequence of derived functors $\ilimit^i:$
$$
\ilimit \mathcal F_1\hookrightarrow \ilimit \mathcal F_2\to \ilimit \mathcal F_3\to \ilimit^1\mathcal F_1\to \ilimit^1\mathcal F_2\to \ilimit^1 \mathcal F_3\to \dots
$$
See (Section 2, \cite{IM:2014}) for details. In this paper, we will use only one property of higher limits, namely, their triviality for $G$-representations. The higher limits $\ilimit^i\mathcal F$ can be defined as $i$th cohomology of the category $\mathcal E$ with coefficients in a representation $\mathcal F$. Since the category $\mathcal E$ has pairwise products, it is contractible (see, for example, Lemma 3.5 \cite{IM:2014}), hence for any $G$-representation $\mathcal F$, $\ilimit^i\mathcal F=0,\ i\geq 1$.
\begin{lemma}\label{limlemma}
Let $\mathcal F(F\twoheadrightarrow G)$ be a representation, which lives in an exact sequence
$$
\mathcal F_1(G)\to \mathcal F(F\twoheadrightarrow G)\to \mathcal F_2(G),
$$
where $\mathcal F_i(F\twoheadrightarrow G)=\mathcal F(G),\ i=1,\,2$ are $G$-representations. Then $\mathcal F(F\twoheadrightarrow G)$  is also a $G$-representation.
\end{lemma}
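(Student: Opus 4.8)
The plan is to split the given three-term exact sequence into an honest short exact sequence of functors whose two outer terms are already $G$-representations, and then to deduce the claim from left-exactness of $\ilimit $ together with the vanishing of higher limits on $G$-representations established above.

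The one structural fact I would isolate first is a reformulation of the notion of $G$-representation. Combining the description of $\ilimit \mathcal H$ as the subgroup of $\mathcal E$-invariant elements of $\mathcal H(c)$ (valid since $\mathcal E$ is strongly connected) with the definition, a functor $\mathcal H:\mathcal E\to{\sf Ab}$ is a $G$-representation if and only if $\mathcal H(\varphi)=\mathcal H(\psi)$ as maps $\mathcal H(c)\to\mathcal H(c')$ for every pair of parallel morphisms $\varphi,\psi:c\to c'$ in $\mathcal E$. Stated this way, the property is visibly inherited by every subfunctor and by every quotient functor of a $G$-representation, hence by the image of any natural transformation one of whose endpoints is a $G$-representation. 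Writing the given sequence as $\mathcal F_1(G)\xrightarrow{a}\mathcal F(F\twoheadrightarrow G)\xrightarrow{b}\mathcal F_2(G)$, exact at the middle term, I would set $K:={\rm im}(a)={\rm Ker}(b)$, a subfunctor of $\mathcal F$, and $Q:={\rm im}(b)$, a subfunctor of $\mathcal F_2(G)$. Then $Q$ is a $G$-representation, being a subfunctor of the $G$-representation $\mathcal F_2(G)$, and $K$ is a $G$-representation, being a quotient of the $G$-representation $\mathcal F_1(G)$; concretely, for parallel $\varphi,\psi:c\to c'$ and $k=a_c(m)\in K(c)$, naturality of $a$ gives $K(\varphi)(k)=a_{c'}(\mathcal F_1(\varphi)(m))=a_{c'}(\mathcal F_1(\psi)(m))=K(\psi)(k)$. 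By construction $0\to K\to\mathcal F\to Q\to 0$ is a short exact sequence of functors.

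Finally I would apply the long exact sequence of derived limits to $0\to K\to\mathcal F\to Q\to 0$: since $K$ is a $G$-representation, $\ilimit^1 K=0$, so $0\to\ilimit K\to\ilimit \mathcal F\to\ilimit Q\to 0$ is exact. Fixing a presentation $c\in\mathcal E$ and comparing this row, via the structure morphisms $\alpha_c$, with the short exact sequence $0\to K(c)\to\mathcal F(c)\to Q(c)\to 0$, the two outer comparison maps are isomorphisms because $K$ and $Q$ are $G$-representations, so the five lemma forces $\alpha_c:\ilimit \mathcal F\to\mathcal F(c)$ to be an isomorphism; as $c$ was arbitrary, $\mathcal F(F\twoheadrightarrow G)$ is a $G$-representation. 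I do not expect a genuine obstacle here: the only points requiring care are the verification that image subfunctors inherit the ``parallel morphisms agree'' property (a one-line naturality computation) and that the comparison square commutes (naturality of $\alpha_c$ in the functor variable), both of which are routine.
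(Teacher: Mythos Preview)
Your proof is correct and follows essentially the same strategy as the paper: replace the given three-term sequence by the short exact sequence $0\to K\to\mathcal F\to Q\to 0$ with $K$ and $Q$ being $G$-representations (as a quotient, respectively a subfunctor, of a $G$-representation), and then conclude via the long exact sequence of derived limits together with $\ilimit^1K=0$. The only cosmetic difference is that the paper verifies the closure of $G$-representations under quotients and subfunctors by separate diagram chases, whereas you use the cleaner ``parallel morphisms agree'' reformulation; the core reduction and the use of the vanishing of higher limits are identical.
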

\begin{proof}
Let us first consider the case $\mathcal F_2=0$, i.e,  the case of a natural surjection
$$
\mathcal F_1(G)\twoheadrightarrow \mathcal F(F\twoheadrightarrow G).
$$
The assertion in this case  follows from the following diagram:
$$
\xyma{\ilimit \mathcal F_1\ar@{>->}[r] \ar@{->}[d] & \mathcal F_1(G)\ar@{->>}[r]\ar@{->>}[d] & \frac{\mathcal F_1(G)}{\ilimit \mathcal F_1}\ar@{->>}[d]\\
\ilimit \mathcal F\ar@{>->}[r] & \mathcal F(F\twoheadrightarrow G)\ar@{->>}[r] & \frac{\mathcal F(F\twoheadrightarrow G)}{\ilimit \mathcal F}},
$$
since the quotient $\frac{\mathcal F_1(G)}{\ilimit \mathcal F_1}$ is zero. Dually, if $\mathcal F_1=0$, the quotient
$$
\mathcal F_2(G)/\mathcal F(F\twoheadrightarrow G)
$$
is a $G$-representation, since it is an epimorphic image of a $G$-representation. The assertion in this case  follows from the following commutative diagram
$$
\xyma{\ilimit \mathcal F\ar@{>->}[r]\ar@{>->}[d] & \ilimit \mathcal F_2\ar@{=}[d]\ar@{->}[r] & \ilimit \frac{\mathcal F_2}{\mathcal F(F\twoheadrightarrow G)}\ar@{=}[d]\\
\mathcal F(F\twoheadrightarrow G)\ar@{>->}[r] & \mathcal F_2(G)\ar@{->>}[r] & \frac{\mathcal F_2(G)}{\mathcal F(F\twoheadrightarrow G)}.}
$$
Let us now assume that the map $\mathcal F_1(G)\to \mathcal F(F\twoheadrightarrow G)$ is a monomorphism and $\mathcal F(F\twoheadrightarrow G)\to \mathcal F_2(G)$ is an epimorphism. Since $\ilimit^1 \mathcal F_2=0$, we obtain the \linebreak following diagram
$$
\xyma{\ilimit \mathcal F_1\ar@{>->}[r] \ar@{=}[d] & \ilimit \mathcal F\ar@{->>}[r] \ar@{>->}[d] & \ilimit \mathcal F_2\ar@{=}[d]\\ \mathcal F_1(G)\ar@{>->}[r] & \mathcal F(F\twoheadrightarrow G)\ar@{->>}[r] & \mathcal F_2(G)}
$$
and the result follows.
\end{proof}\par\vspace{.25cm}
\subsection{Quadratic functors}
We will use the following basic quadratic functors: \linebreak tensor square $\otimes^2,$ symmetric square ${\sf SP}^2$, exterior square $\Lambda^2$ and antisymmetric square $\tilde \otimes^2$. Recall that, for an abelian group $A$,
\begin{align*}
& {\sf SP}^2(A)=\otimes^2(A)/\langle a\otimes b-b\otimes a,\ a,\,b\in A\rangle\\
& \Lambda^2(A)=\otimes^2(A)/\langle a\otimes a,\ a\in A\rangle\\
& \tilde \otimes^2(A)=\otimes^2(A)/\langle a\otimes b+b\otimes a,\ a,\,b\in A\rangle.
\end{align*}
For any abelian group $A$, there is a natural exact sequence
$$
0\to A\otimes \mathbb Z/2\to \tilde\otimes^2(A)\to \Lambda^2(A)\to 0,
$$
where the left hand map is given by $\bar a\mapsto a\otimes a,\ a\in A,\ \bar a=a+2A\in A\otimes \mathbb Z/2$.

The derived
functors in the sense of Dold-Puppe \cite{DP:1961} are defined as follows. For an
abelian group $A$ and an endofunctor $F$ on the category of
abelian groups, the derived functor of $F$ is given as
$$
L_iF(A)=\pi_i(FKP_\ast),\ i\geq 0,
$$
where $P_\ast \to A$ is a projective resolution of $A$, and
 $K$ is  the Dold-Kan transform,  inverse to the Moore normalization
 functor from simplicial abelian groups to chain complexes.

The first derived functor $L_1{\sf SP}^2(A)$ which will be used in many places of this paper is the natural quotient
of ${\sf Tor}(A,\,A)$ by diagonal elements $(a,\,a),\ ma=0,\ a\in A$. The first derived functor $L_1\Lambda^2(A)$ is the subfunctor of ${\sf Tor}(A,\,A)$ generated by these diagonal elements, i.e., there is a natural short exact sequence
$$
0\to L_1\Lambda^2(A)\to {\sf Tor}(A,A)\to L_1{\sf SP}^2(A)\to 0.
$$
An abstract value of the abelian group $L_1{\sf SP}^2$ can be easily computed from the following data:
$$
L_1{\sf SP}^2(\mathbb Z/m)=L_1{\sf SP}^2(\mathbb Z)=0,\ m\geq 2;
$$
and
$$
L_1{\sf SP}^2(A\oplus B)=L_1{\sf SP}^2(A)\oplus L_1{\sf SP}^2(B)\oplus {\sf Tor}(A,B),
$$
for all abelian groups $A,B$.
\par\vspace{.25cm}
The derived functors $L_1\Lambda^2$ and $L_1{\sf SP}^2$ naturally appear in the homology of Eilenberg-MacLane spaces. For example, for an abelian $A$, there are natural short exact sequences, which do not split (see \cite{Breen})
\begin{align*}
& 0\to \Lambda^3(A)\to H_3(A)\to L_1\Lambda^2(A)\to 0\\
& 0\to L_1{\sf SP}^2(A)\to H_5K(A,2)\to {\sf Tor}(A,\mathbb Z/2)\to 0,
\end{align*}
where $\Lambda^3$ is the exterior cube. We refer to the thesis of F. Jean \cite{Jean:2002} for the structure of derived functors of higher symmetric powers.
\par\vspace{.25cm}
We also need some  other natural exact sequences, like the following
\begin{equation}\label{l1to}
0\to {\sf Tor}(A,\,\mathbb Z/2)\to L_1\tilde\otimes^2(A)\to L_1\Lambda^2(A)\to 0,
\end{equation}
and
\begin{equation}\label{les}
0\to L_1{\sf SP}^2(A)\to {\sf Tor}(A,\,A)\to L_1\tilde \otimes^2(A)\to \\
{\sf SP}^2(A)\to \otimes^2(A)\to \tilde\otimes^2(A)\to 0.
\end{equation}
Note that the map $L_1\tilde \otimes^2(A)\to {\sf SP}^2(A)$ is, in general, non-zero. There sequences are obtained by deriving the sequences
$-\otimes \mathbb Z/2\to \tilde\otimes^2\twoheadrightarrow\Lambda^2$ and ${\sf SP}^2\to \otimes^2\twoheadrightarrow \tilde\otimes^2$ respectively.
\par\vspace{.25cm}
The following sequence will be used the proofs of our main results several times. For a free abelian group $I$ and its subgroup $J$, there is a natural exact sequence
\begin{equation}\label{l1sp}
0\to L_1{\sf SP}^2(I/J)\to \frac{\Lambda^2(I)}{\Lambda^2(J)}\to I\otimes I/J\to {\sf SP}^2(I/J)\to 0,
\end{equation}
where the image of an element $(x,\,y),\ mx=my\in J,\  x,\,y\in I$, is $mx\wedge y$. For the proof see (Theorem 12, \cite{MP:2016}; Section 3, \cite{MP:2017}). The proof directly follows from the result of K\"ock \cite{Kock:2001} saying that the Koszul-type complex $\Lambda^2(J)\to I\otimes J\to {\sf SP}^2(I)$ represents the element $L{\sf SP}^2(I/J)$ of the derived category of abelian groups.

\par\vspace{.5cm}
\section{Generalized dimension quotients} In analogy with the dimension subgroups $$D_n(G)=F\cap (1+\mathfrak r\mathbb Z[F]+\mathfrak f^n)/\gamma_n(R), \ n\geq 1,$$ when $R$ is a normal subgroup of a free group $F$ with $G=F/R$, the normal subgroups $F\cap(1+\mathfrak a+\mathfrak f^n)$, $n\geq 1$, where $\mathfrak a$ is a two-sided ideal of $\mathbb Z[F]$ are called {\it generalized dimension subgroups.} We set $$D(n,\, \mathfrak a):=F\cap (1+\mathfrak a+\mathfrak f^n).$$
An example of description of a generalized dimension subgroup and its connection to a derived functor, which we will use later, is the following. It is shown in \cite{HMP:2009} that, there is a natural isomorphism
\begin{equation}\label{3fr}
\frac{D(3,\,{\mathfrak r\mathfrak f})}{\gamma_2(R)\gamma_3(F)}=\frac{D(3,\,{\mathfrak f\mathfrak r})}{\gamma_2(R)\gamma_3(F)}=L_1{\sf SP}^2(G_{ab}).
\end{equation}
See \cite{MP:2016} and \cite{MP:2017} for more examples of such type.
\par\vspace{.25cm}
We need the identification of certain generalized dimension subgroups.  Recall that \begin{quote}{\it if $R$ is a normal subgroup of a free group $F$ and $Y$ is a basis of $R$, then the two-sided ideal $\mathfrak r\mathbb Z[F]$, viewed as a left (resp. right) $\mathbb Z[F]$-module, is free with basis $\{y-1\,|\,y\in Y\}$} (\cite{Gruenberg:1970}, Theorem 1, p.\,32). \end{quote}

\par\vspace{.25cm}
\begin{lemma}\label{fg1}
 If  $F$ is a free group of finite rank $m\geq 1$, $X=\{x_1,\,x_2,\ldots,\,x_m\}$ an ordered basis of $F$, and $R$  a normal subgroup of $F$ generated by $$\{x_1^{e_1}\xi_1,\, \, x_2^{e_2}\xi_2,\,\ldots,\,x_m^{e_m}\xi_m,\, \xi_{m+1},\ldots\}$$ with $\xi_i\in \gamma_2(F)$ for $i=1,\,2,\ldots,\ $ and integers $e_i\geq 0$ satisfying $e_m|e_{m-1}|\ldots|e_2|e_1$, then the generalized  dimension subgroup
$$D(4,\, \mathfrak f\mathfrak r\mathfrak f+\mathfrak f^2\mathfrak r):=F\cap (1+\mathfrak f\mathfrak r\mathfrak f+\mathfrak f^2\mathfrak r+\mathfrak f^4)$$ is generated, modulo $\gamma_4(F)$,  by the commutators
\begin{itemize}
\item$ [[x_j,\,x_i],\,x_i]^{e_i}\ (j>i),$
\item
$ [[x_j,\,x_i],\,x_k]^{\ell_{jk}}\  (j>i<k), \ \text{where}\  \ell_{jk}=\operatorname{lcm}(e_j,\ e_k).$
\end{itemize}
\end{lemma}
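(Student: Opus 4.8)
The plan is to compute the generalized dimension subgroup $D(4,\,\mathfrak f\mathfrak r\mathfrak f+\mathfrak f^2\mathfrak r)$ modulo $\gamma_4(F)$ by working in the associated graded ring of $\mathbb Z[F]$ with respect to the filtration $\{\mathfrak f^n\}$, which in degrees $\leq 3$ is the free Lie ring on $X$ (via the Magnus embedding), so that $\mathfrak f^3/\mathfrak f^4$ is the degree-$3$ part of the free associative ring on $\{x_i-1\}$ and $\gamma_3(F)/\gamma_4(F)$ embeds as the degree-$3$ part of the free Lie ring. First I would set up notation: write $X_i = x_i - 1 \in \mathfrak f$, and recall that since $F\cap(1+\mathfrak f^4) = \gamma_4(F)$, an element $w\in F$ lies in $D(4,\,\mathfrak f\mathfrak r\mathfrak f+\mathfrak f^2\mathfrak r)$ modulo $\gamma_4(F)$ precisely when, writing $w = c \bmod \gamma_3(F)$ is trivial and $w-1 \in \mathfrak f^3$, the degree-$3$ component $(w-1)+\mathfrak f^4$ lies in the image of $(\mathfrak f\mathfrak r\mathfrak f+\mathfrak f^2\mathfrak r+\mathfrak f^4)/\mathfrak f^4$ intersected with the Lie part. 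So the problem reduces to a linear-algebra computation inside the degree-$3$ part of the free associative ring: determine which $\mathbb Z$-linear combinations of the basic Lie monomials $[[x_j,x_i],x_k]$ (which span a known sublattice of the rank-$m^3$ free module on the degree-$3$ words $X_aX_bX_c$) lie in the additive subgroup generated by the degree-$3$ parts of $\mathfrak f\mathfrak r\mathfrak f$ and $\mathfrak f^2\mathfrak r$.

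The key computational step is to identify the degree-$3$ part of $\mathfrak f\mathfrak r\mathfrak f + \mathfrak f^2\mathfrak r$ in terms of the given generators of $R$. Using the cited fact that $\mathfrak r\mathbb Z[F]$ is free as a right $\mathbb Z[F]$-module on $\{y-1 : y\in Y\}$ for a basis $Y$ of $R$, and the chosen generating set $\{x_i^{e_i}\xi_i\} \cup \{\xi_j\}$ with $\xi_i \in \gamma_2(F)$, I would compute $(x_i^{e_i}\xi_i) - 1 \equiv e_i X_i + (\xi_i - 1) \pmod{\mathfrak f^2}$, where $\xi_i - 1 \in \mathfrak f^2$. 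Hence modulo $\mathfrak f^3$, the ideal $\mathfrak r$ is spanned by $\{e_i X_i\} \cup \{\xi_j - 1\}$; modulo $\mathfrak f^4$ one gets the degree-$2$ and degree-$3$ contributions from the $\xi$'s as well. Then $\mathfrak f^2\mathfrak r + \mathfrak f\mathfrak r\mathfrak f$ in degree $3$ is spanned by $X_aX_b\cdot(e_iX_i)$, $X_a\cdot(e_iX_i)\cdot X_b$, plus the images of $\mathfrak f\cdot(\xi_j-1)$ and $(\xi_j-1)\cdot\mathfrak f$ and their analogues for $\xi_i$ in the $x_i^{e_i}\xi_i$ generators. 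The upshot I expect is that, after intersecting with the Lie sublattice and accounting for the divisibility chain $e_m\mid\cdots\mid e_1$, the surviving generators are exactly $[[x_j,x_i],x_i]^{e_i}$ for $j>i$ (coming from terms like $X_j(e_iX_i)X_i$ and $X_iX_j(e_iX_i)$ symmetrized into the Lie element) and $[[x_j,x_i],x_k]^{\operatorname{lcm}(e_j,e_k)}$ for $j>i<k$, where the $\operatorname{lcm}$ appears because this Lie monomial acquires an $e_j$-multiple from one side (through $x_j^{e_j}$ appearing) and an $e_k$-multiple from the other, and an integer combination is in the ideal iff it is divisible by both $e_j$ and $e_k$.

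The main obstacle will be the last bookkeeping step: carefully matching the additive span of the associative degree-$3$ monomials coming from $\mathfrak f\mathfrak r\mathfrak f + \mathfrak f^2\mathfrak r$ against the Lie sublattice spanned by the $[[x_j,x_i],x_k]$, and showing that intersection is generated by precisely the claimed commutators with precisely the claimed exponents — in particular checking that no smaller multiple of any basic commutator slips into the ideal, and that the $\xi$-contributions (which live in degree $\geq 2$ and push to degree $3$ under left/right multiplication by $\mathfrak f$) do not enlarge the subgroup beyond what the $e_iX_i$ terms already give, modulo $\gamma_4(F)$. I would organize this by choosing a convenient $\mathbb Z$-basis of the degree-$3$ associative part adapted to the Lie decomposition $\mathfrak f^3/\mathfrak f^4 \cong (\text{Lie}_3) \oplus (\text{complement})$, expressing each ideal generator in that basis, and performing a Smith-normal-form-type argument on the resulting integer matrix; the divisibility hypotheses on the $e_i$ are exactly what make the $\operatorname{lcm}$ and the single-index $e_i$ answers come out cleanly.
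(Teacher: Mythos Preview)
Your approach is correct and is essentially a reformulation of the paper's argument in different language. Both you and the authors reduce to a computation in the degree-$3$ graded piece $\mathfrak f^3/\mathfrak f^4$: one must determine which integer combinations of the Hall basis commutators $[[x_j,x_i],x_k]$ ($j>i\le k$) land in the image of $\mathfrak f\mathfrak r\mathfrak f+\mathfrak f^2\mathfrak r$. Your observation that the $\xi_i$-parts contribute nothing in degree $3$ (since $\xi_i-1\in\mathfrak f^2$) is exactly the content of the paper's preliminary step replacing $R$ by $S=\langle x_i^{e_i},\gamma_2(F)\rangle$, after which the degree-$3$ image of the ideal is the sublattice $\bigoplus_{a,b,c}\gcd(e_b,e_c)\,\mathbb Z\cdot X_aX_bX_c$.

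The genuine methodological difference is in how the intersection with the Lie sublattice is extracted. The paper does not set up a Smith-normal-form computation; instead it uses Fox free differential calculus, applying left and right partial derivatives $\partial/\partial x_i$ to $w-1$ to peel off one variable at a time and read off the divisibility conditions $e_i\mid a_{iji}$, $e_j\mid a_{ijk}$, $e_k\mid a_{ijk}$ directly. This is cleaner than the matrix approach because each differentiation step lands in a smaller quotient where a single congruence becomes visible, and one never has to track how different Hall basis elements share associative monomials. Your lattice-intersection plan will reach the same conclusion, but you will have to handle the cross-terms carefully: for distinct $i<j<k$ the two Hall generators $[[x_j,x_i],x_k]$ and $[[x_k,x_i],x_j]$ share the monomials $X_jX_iX_k$ and $X_kX_iX_j$, and it is the combination of constraints from all six monomials (using $e_k\mid e_j$) that forces both coefficients to be divisible by $e_j=\operatorname{lcm}(e_j,e_k)$. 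So your ``main obstacle'' is real but tractable; the Fox-derivative technique is simply a tidier bookkeeping device for the same linear algebra.
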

\begin{proof}
It is easy to see that the elements $$ [[x_j,\,x_i],\,x_i]^{e_i}\ (j>i),\ \text{and}\  [[x_j,\,x_i],\,x_k]^{\ell_{jk}}\  (j>i<k), $$where $  \ell_{jk}=\operatorname{lcm}(e_j,\ e_k)$ all lie in $D(4,\, \mathfrak f\mathfrak r\mathfrak f+\mathfrak f^2\mathfrak r)$.\par\vspace{.5cm}\noindent
Let $S=\langle x^{e_i} \ (1\le i\leq m), \gamma_2(F)\rangle$, and $w\in D(4,\,\mathfrak f\mathfrak r\mathfrak f+\mathfrak f^2\mathfrak r)$. Observe that
$$\mathfrak f\mathfrak r\mathfrak f+\mathfrak f^2\mathfrak r+\mathfrak f^4=\mathfrak f\mathfrak s\mathfrak f+\mathfrak f^2\mathfrak s+\mathfrak f^4.$$
Since $D_3(F)=\gamma_3(F)$ and $\{[[x_j,\,x_i],\,x_k]\,|\, m\geq j>i\leq k\leq m\}$ is a basis of $\gamma_3(F)/\gamma_4(F)$, we have, modulo $\gamma_4(F)$,  $$w=\prod _{i=1}^mw_i,\ w_i=\prod _{j>i\leq k}[[x_j,\,x_i],\,x_k]^{a_{ijk}}\in D(4,\,\mathfrak f\mathfrak s\mathfrak f+\mathfrak f^2\mathfrak s),\ a_{ijk}\in \mathbb Z.$$
Modulo $\mathfrak f^4$, we have \begin{multline}\label{1}W_i:=w_i-1=\sum_{j> i\leq k}\{a_{ijk}[(x_j-1)(x_i-1)-(x_i-1)(x_j-1)](x_k-1)\\-(x_k-1)[(x_j-1)(x_i-1)-(x_i-1)(x_j-1)]\}.\end{multline}
Left differentiating  (in the sense of free differential calculus  {[see \cite{Gupta:1987}, p.7]}) with respect to $x_i$, we have\begin{multline}\label{2}_{x_i}W_i=\sum_{j>i}a_{iji}[(x_j-1)(x_i-1)-2(x_i-1)(x_j-1)]\\-\sum_{j>i<k}a_{ijk}(x_k-1)(x_j-1)\in \mathfrak f\mathfrak s+e_i\mathfrak f^2+\mathfrak f^3.\end{multline}
Right differentiating with respect to $x_i$, gives
$$2a_{iji}(x_j-1)\in \mathfrak s+e_i\mathfrak f+\mathfrak f^2\ (j>i).$$Hence $e_j|2a_{iji},\ j>i,$
and Eq(\ref{2}) implies \begin{multline}\label{3}
\sum_{j>i}a_{iji}(x_j-1)(x_i-1)-\sum_{j>i<k}a_{ijk}(x_k-1)(x_j-1)\in \mathfrak f\mathfrak s+e_i\mathfrak f^2+\mathfrak f^3.\end{multline}
Since the second sum does not involve $x_i$, we conclude that \begin{equation}\label{4}\sum_{j>i}a_{iji}(x_j-1)(x_i-1)\in \mathfrak f\mathfrak s+e_i\mathfrak f^2+\mathfrak f^3\end{equation}and \begin{equation}\label{5}\sum_{j>i<k}a_{ijk}(x_k-1)(x_j-1)\in \mathfrak f\mathfrak s+e_i\mathfrak f^2+\mathfrak f^3.\end{equation}
Eq(\ref{4}) implies that $a_{iji}(x_i-1)\in \mathfrak s+e_i\mathfrak f+\mathfrak f^2$, and therefore we have \begin{equation}\label{6}e_i|a_{iji},\ j>i.\end{equation}
Similarly Eq(\ref{5}) implies that\begin{equation}\label{7}e_j|a_{ijk},\ j>i<k.\end{equation}
Eq(\ref{1}) thus reduces to \begin{equation}\label{8}\sum_{j>i<k}a_{ijk}(x_j-1)(x_i-1)(x_k-1)\in \mathfrak f\mathfrak s\mathfrak f+\mathfrak f^2\mathfrak s+\mathfrak f^4.\end{equation}Eq(\ref{8}) yields that \begin{equation}\label{9}e_k|a_{ijk},\ j>i<k.\end{equation}Eqs (\ref{7}) and (\ref{9}) imply \begin{equation}\label{10} \ell_{ij}|a_{ijk},\ j>i<k, \ \ell=\operatorname{lcm}(e_j,\,e_k).\end{equation} Hence the element $w$ lies in the subgroup generated by the commutators \begin{itemize}
\item$ [[x_j,\,x_i],\,x_i]^{e_i}\ (j>i),$
\item
$ [[x_j,\,x_i],\,x_k]^{\ell_{jk}}\  (j>i<k), \ \text{where}\  \ell_{jk}=\operatorname{lcm}(e_j,\ e_k),$ as claimed.\end{itemize}\end{proof}

\begin{corollary}\label{corfrf} If $F$ is a free group and $R$ a normal subgroup of $F$ such that  $F/R\gamma_2(F)$ is  torsion-free, then
$$
D(4,\,{\mathfrak{frf}}+{\mathfrak{f}^2\mathfrak r})=[[R,\,F],\,R]\gamma_4(F).
$$
\end{corollary}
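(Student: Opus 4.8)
The plan is to reduce the assertion to Lemma~\ref{fg1} by passing to a basis of $F$ adapted to $R$ and then to a finitely generated free factor; the inclusion $[[R,\,F],\,R]\gamma_4(F)\subseteq D(4,\,\mathfrak{frf}+\mathfrak f^2\mathfrak r)$ will be a short group-ring computation, and the reverse inclusion will come straight from that lemma. For the first inclusion I would note $\gamma_4(F)-1\subseteq\mathfrak f^4$, and that for $r,r'\in R$ and $f\in F$ one has, modulo $\mathfrak f^4$,
$$[[r,f],r']-1\equiv(r-1)(f-1)(r'-1)-(f-1)(r-1)(r'-1)-(r'-1)(r-1)(f-1)+(r'-1)(f-1)(r-1);$$
each of these four monomials lies in $\mathfrak{frf}+\mathfrak f^2\mathfrak r$ simply because $\mathfrak r\subseteq\mathfrak f$ (so $\mathfrak r\mathfrak f\mathfrak r\subseteq\mathfrak f^2\mathfrak r$, $\mathfrak f\mathfrak r\mathfrak r\subseteq\mathfrak{frf}$, $\mathfrak r\mathfrak r\mathfrak f\subseteq\mathfrak{frf}$), so since $[[R,F],R]$ is generated by such commutators, the inclusion follows. (This is essentially the ``easy to see'' remark opening the proof of Lemma~\ref{fg1}.)

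For the reverse inclusion, take $w\in D(4,\,\mathfrak{frf}+\mathfrak f^2\mathfrak r)$. As $F/R\gamma_2(F)$ is torsion-free, $R\gamma_2(F)/\gamma_2(F)$ is a direct summand of the free abelian group $F/\gamma_2(F)$, so I would choose a basis $X=X_0\sqcup X_1$ of $F$ whose image modulo $\gamma_2(F)$ splits this summand, with $\{\bar x:x\in X_0\}$ a basis of $R\gamma_2(F)/\gamma_2(F)$, and write $x=r_x\xi_x$ with $r_x\in R$, $\xi_x\in\gamma_2(F)$ for $x\in X_0$. Then I would pick a finite $X'\subseteq X$ with $w\in F_0:=\langle X'\rangle$, let $\pi\colon F\twoheadrightarrow F_0$ be the retraction with kernel $\langle X\setminus X'\rangle$, and put $R_0=\pi(R)$. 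Since $R\trianglelefteq F$ and $\pi$ is the identity on $F_0$, $R_0\trianglelefteq F_0$; applying $\pi$ to the defining containment of $w$ gives $w\in D(4,\,\mathfrak f_0\mathfrak r_0\mathfrak f_0+\mathfrak f_0^2\mathfrak r_0)$; the image of $R_0$ in $(F_0)_{ab}$ is the span of $\{\bar x:x\in X_0\cap X'\}$, hence a direct summand, so $F_0/R_0\gamma_2(F_0)$ is still torsion-free; and $R_0$ is generated by elements $x\zeta_x$ ($x\in X_0\cap X'$, $\zeta_x\in\gamma_2(F_0)$) together with elements of $\gamma_2(F_0)$. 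Thus $(F_0,R_0)$ satisfies the hypotheses of Lemma~\ref{fg1} with all exponents $e_i\in\{0,1\}$ (order $X'$ so that $X'\cap X_1$ comes first).

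Now I would invoke Lemma~\ref{fg1}: since $\operatorname{lcm}(e_j,e_k)=1$ exactly when $e_j=e_k=1$, the group $D(4,\,\mathfrak f_0\mathfrak r_0\mathfrak f_0+\mathfrak f_0^2\mathfrak r_0)$ is generated, modulo $\gamma_4(F_0)$, by the commutators $[[x_j,x_i],x_i]$ with $x_i\in X_0\cap X'$ and $[[x_j,x_i],x_k]$ with $x_j,x_k\in X_0\cap X'$. For such indices $x_i\equiv r_{x_i}$, $x_j\equiv r_{x_j}$, $x_k\equiv r_{x_k}$ modulo $\gamma_2(F)$, and because the triple-commutator map $F\times F\times F\to\gamma_3(F)/\gamma_4(F)$ factors through $(F/\gamma_2(F))^{\times 3}$, each of these commutators is congruent modulo $\gamma_4(F)$ to $[[x_j,r_{x_i}],r_{x_i}]$, respectively $[[r_{x_j},x_i],r_{x_k}]$, which lie in $[[F,R],R]\gamma_4(F)=[[R,F],R]\gamma_4(F)$. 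As $\gamma_4(F_0)\subseteq\gamma_4(F)$ and $w\in F_0$, we conclude $w\in[[R,F],R]\gamma_4(F)$, which finishes the proof.

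The step I expect to be the main obstacle is the reduction in the second paragraph: verifying that the retraction $\pi$ genuinely carries $(F,R)$ to a finite-rank pair $(F_0,R_0)$ meeting the hypotheses of Lemma~\ref{fg1} \emph{verbatim} --- in particular that $F_0/R_0\gamma_2(F_0)$ remains torsion-free and that $R_0$ acquires a generating set of exactly the prescribed form --- together with keeping track of the convention $\operatorname{lcm}(0,e)=0$ implicit in Lemma~\ref{fg1}, which is precisely what cuts the list of surviving generators down to the two families used above. The commutator identities and the ideal-membership bookkeeping elsewhere are routine.
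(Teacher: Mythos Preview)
Your proposal is correct and follows essentially the same approach as the paper: establish the easy inclusion directly and obtain the reverse one by reducing to the finitely generated case and invoking Lemma~\ref{fg1}. The paper's proof is a one-sentence sketch of exactly this reduction, and your write-up simply supplies the details (the adapted basis, the retraction onto a finite-rank free factor, and the translation of the resulting generators into $[[R,F],R]\gamma_4(F)$) that the paper leaves implicit.
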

\begin{proof} It is easy to see that $[[R,\,F],\,R]\gamma_4(F)\subseteq D(4,\,{\mathfrak{frf}}+{\mathfrak{f}^2\mathfrak r})$. Observe that, for the reverse inclusion, it suffices to consider the case when $F$ is finitely generated, and so Lemma \ref{fg1} applies.
\end{proof}
 \par\vspace{.25cm} With free group $F$ and its subgroup $S$ as in Lemma \ref{fg1}, we have \par\vspace{.25cm}
\begin{lemma}\label{l2}
$$D(4,\,\mathfrak s\mathfrak f\cap \mathfrak f^3)=[\gamma_2(F),\,S]\gamma_4(F).$$
\end{lemma}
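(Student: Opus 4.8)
The plan is to prove the two inclusions separately, $[\gamma_2(F),\,S]\gamma_4(F)\subseteq D(4,\,\mathfrak s\mathfrak f\cap\mathfrak f^3)$ being the routine one. Since $S$ is normal in $F$, $\mathfrak s$, hence $\mathfrak s\mathfrak f$, is a two-sided ideal, so it is enough to check that the generators $[c,\,s]$, $c\in\gamma_2(F)$, $s\in S$, satisfy $[c,\,s]-1\in\mathfrak s\mathfrak f+\mathfrak f^4$; as they also lie in $\mathfrak f^3$ one then gets $[c,\,s]-1\in(\mathfrak s\mathfrak f\cap\mathfrak f^3)+\mathfrak f^4$, while $\gamma_4(F)\subseteq F\cap(1+\mathfrak f^4)$ is clear. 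From $[c,\,s]-1=c^{-1}s^{-1}\big((c-1)(s-1)-(s-1)(c-1)\big)$, the term $(s-1)(c-1)$ lies in $\mathfrak s\mathfrak f$ outright; for $(c-1)(s-1)$ one uses $[c,\,s_1s_2]\equiv[c,\,s_1][c,\,s_2]\pmod{\gamma_4(F)}$ to reduce to $s$ a generator $x_i^{e_i}$ or $[x_a,\,x_b]$ of $S$, and then a short expansion in $\mathbb Z[F]$ settles it (for $s\in\gamma_2(F)$ the product already lies in $\mathfrak f^4$; for $s=x_i^{e_i}$ and $c=[x_a,\,x_b]$ the leading term of $(c-1)(s-1)$ is $(\bar x_a\bar x_b-\bar x_b\bar x_a)\otimes e_i\bar x_i$, which is the image of $([x_a,\,x_b]-1)(x_i-1)\in\mathfrak s\mathfrak f$).

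For the reverse inclusion the starting point is that $D(4,\,\mathfrak s\mathfrak f\cap\mathfrak f^3)\subseteq D_3(F)=\gamma_3(F)$, and that for $w\in\gamma_3(F)$ one has $w\in D(4,\,\mathfrak s\mathfrak f\cap\mathfrak f^3)$ if and only if the image of $w-1$ under $\gamma_3(F)/\gamma_4(F)\hookrightarrow\mathfrak f^3/\mathfrak f^4$ lies in the image $\mathrm{gr}_3(\mathfrak s\mathfrak f)$ of $\mathfrak s\mathfrak f\cap\mathfrak f^3$. So the whole statement takes place inside $\mathrm{gr}\,\mathbb Z[F]$, the tensor ring on $V:=\mathfrak f/\mathfrak f^2=\bigoplus\mathbb Z\bar x_i$. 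Using $\mathbb Z[F]/\mathfrak s\cong\mathbb Z[F/S]$ and the low-degree structure of $\mathrm{gr}\,\mathbb Z[A]$ for the finitely generated abelian group $A=F/S=\bigoplus\mathbb Z/e_i$, I would compute $\mathrm{gr}_1(\mathfrak s)=V_S$ and $\mathrm{gr}_2(\mathfrak s)=V_S\!\cdot\!V+V\!\cdot\!V_S+[V,V]$, where $V_S=\bigoplus e_i\mathbb Z\bar x_i$ and $[V,V]$ is the span of the brackets $\bar x_a\bar x_b-\bar x_b\bar x_a$, and then deduce $\mathrm{gr}_3(\mathfrak s\mathfrak f)=V_S\!\cdot\!V^{\otimes2}+V\!\cdot\!V_S\!\cdot\!V+[V,V]\!\cdot\!V$. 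On the other side, the image of $[\gamma_2(F),\,S]\gamma_4(F)$ in $\mathfrak f^3/\mathfrak f^4$ is the subgroup generated by $e_k[[\bar x_j,\bar x_i],\bar x_k]=e_k(\bar x_j\bar x_i-\bar x_i\bar x_j)\bar x_k-(e_k\bar x_k)(\bar x_j\bar x_i-\bar x_i\bar x_j)$, $j>i$.

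Hence the statement reduces to the algebraic identity $\mathrm{gr}_3(\mathfrak s\mathfrak f)\cap L_3=\langle e_k[[\bar x_j,\bar x_i],\bar x_k]:j>i\rangle$, where $L_3\cong\gamma_3(F)/\gamma_4(F)$ is the degree-three part of the free Lie ring inside $V^{\otimes3}$. The inclusion $\supseteq$ is read off at once from the two displayed formulas, each generator landing in $[V,V]\!\cdot\!V+V_S\!\cdot\!V^{\otimes2}$. For $\subseteq$ I would expand $w$ modulo $\gamma_4(F)$ in the Hall basis as $\prod_{j>i\le k}[[x_j,x_i],x_k]^{a_{ijk}}$, so that $w-1$ maps to $\sum a_{ijk}[[\bar x_j,\bar x_i],\bar x_k]$ in $V^{\otimes3}$, and extract conditions on the $a_{ijk}$: for each $l$, since $\mathfrak s$ is a two-sided ideal, $w-1\in\mathfrak s\mathfrak f+\mathfrak f^4$ forces $\partial_r^{(l)}(w-1)\in\mathfrak s+\mathfrak f^3$ ($\partial_r^{(l)}$ the right Fox derivative), and as $\partial_r^{(l)}(w-1)\in\mathfrak f^2$ its class in $V^{\otimes2}$ lies in $\mathrm{gr}_2(\mathfrak s)$ — i.e.\ the coefficient on $\bar x_p\bar x_p$ is divisible by $e_p$, and for $p\ne q$ the sum of the coefficients on $\bar x_p\bar x_q$ and $\bar x_q\bar x_p$ is divisible by $\gcd(e_p,e_q)$. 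Feeding in the explicit $\partial_r^{(l)}[[\bar x_j,\bar x_i],\bar x_k]$, and using the Jacobi identity to rewrite in the Hall basis those $[[x_j,x_i],x_k]$ with $k<i$, one obtains divisibility relations on the $a_{ijk}$ equivalent to $w\in[\gamma_2(F),\,S]\gamma_4(F)$. The delicate point, exactly as in the proof of Lemma \ref{fg1}, is to show that these necessary conditions are already sufficient, i.e.\ that they force $w-1\in\mathrm{gr}_3(\mathfrak s\mathfrak f)$; equivalently, that $\mathrm{gr}_3(\mathfrak s\mathfrak f)\cap L_3=\big(\mathrm{gr}_1(\mathfrak s)V^{\otimes2}+\mathrm{gr}_2(\mathfrak s)V\big)\cap L_3$, so that no cancellation between the two factors of $\mathfrak s\mathfrak f$ contributes extra Lie elements.
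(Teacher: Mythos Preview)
Your overall plan---write $w$ in the Hall basis modulo $\gamma_4(F)$ and use Fox differentiation to extract divisibility conditions on the exponents---is exactly the paper's, and your treatment of the easy inclusion is fine. But you miss the one observation that makes the hard direction short, and as a result you end up flagging a ``delicate point'' that the paper never has to face.

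The point is that $\gamma_2(F)\subseteq S$, so for every $j>i$ the element $[x_j,x_i]-1$, and hence $(x_j-1)(x_i-1)-(x_i-1)(x_j-1)$, already lies in $\mathfrak s$. In the expansion
\[
w_i-1\equiv\sum_{j>i\le k}a_{ijk}\Big(\big[(x_j-1)(x_i-1)-(x_i-1)(x_j-1)\big](x_k-1)-(x_k-1)\big[\cdots\big]\Big)
\]
the first half is therefore in $\mathfrak s\mathfrak f$ for free, so the hypothesis $w_i-1\in\mathfrak s\mathfrak f+\mathfrak f^4$ forces
\[
W_i':=\sum_{j>i\le k}a_{ijk}(x_k-1)\big[(x_j-1)(x_i-1)-(x_i-1)(x_j-1)\big]\in\mathfrak s\mathfrak f+\mathfrak f^4.
\]
Now a single differentiation (extracting the rightmost factor $(x_j-1)$) gives $\sum_{k\ge i}a_{ijk}(x_k-1)(x_i-1)\in\mathfrak s+\mathfrak f^3$; passing to $\mathrm{SP}^2(F/S)$ (where the $\bar x_i\bar x_k$, $k\ge i$, are independent of order $e_k$) yields $e_k\mid a_{ijk}$ at once. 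That is already $w\in[\gamma_2(F),S]\gamma_4(F)$, and the proof ends. There is no need to compute $\mathrm{gr}_3(\mathfrak s\mathfrak f)$, no Jacobi rewriting, and no question of whether ``necessary conditions are sufficient'': once the divisibilities $e_k\mid a_{ijk}$ are derived as \emph{necessary} consequences of $w-1\in\mathfrak s\mathfrak f+\mathfrak f^4$, you are done---the converse is the easy inclusion you already proved.

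By contrast, your route applies the derivative to the whole of $w-1$ and then tries to read off conditions from membership in $\mathrm{gr}_2(\mathfrak s)$. That does eventually work, but the conditions come out entangled (symmetric-sum congruences modulo $\gcd(e_p,e_q)$ rather than the clean $e_k\mid a_{ijk}$), and your proposal stops before disentangling them. The paper's splitting trick is precisely what lets one avoid that computation.
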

\begin{proof}
Let $w\in D(4,\,\mathfrak s\mathfrak f\cap \mathfrak f^3)$. Then, modulo $\gamma_4(F)$,  $$w=\prod _{i=1}^{m-1}w_i,\ w_i=\prod_{m\geq j>i\leq k\leq m}[[x_j,\,x_i],\,x_k]^{a_{ijk}}, \ a_{ijk}\in \mathbb Z,$$ with $w_i\in D(4,\,\mathfrak s\mathfrak f\cap \mathfrak f^3)$. Modulo  $ \mathfrak s\mathfrak f\cap \mathfrak f^3+\mathfrak f^4$, for every $i\in \{1,\,2,\,\ldots\,,\,m-1\}$,
\begin{multline}\label{11}W_i:=w_i-1=\sum_{m\geq j> i\leq k\leq m}\{a_{ijk}[(x_j-1)(x_i-1)-(x_i-1)(x_j-1)](x_k-1)\\-(x_k-1)[(x_j-1)(x_i-1)-(x_i-1)(x_j-1)]\}\end{multline}
Since $(x_j-1)(x_i-1)-(x_i-1)(x_j-1)\in \mathfrak s$, it follows that
\begin{multline}\label{17}W_i':=\sum_{m\geq j>i\leq k\leq m}a_{ijk}(x_k-1)[(x_j-1)(x_i-1)-(x_i-1)(x_j-1)]\in \mathfrak s\mathfrak f\cap \mathfrak f^3+\mathfrak f^4.\end{multline}
Eq(\ref{17})  implies that for every $j\,\in \{i+1,\,\ldots,\,m\}$ and $i\in \{1,\,2,\,\ldots\,m-1\}$,
\begin{equation}\label{19}
_{x_j}W'_i=\sum_{k=i}^ma_{ijk}(x_k-1)(x_i-1)\in  \mathfrak s+\mathfrak f^3.\end{equation}
It follows that
\begin{equation}\label{20}
e_k|a_{ijk}, \ \text{for}\ m\geq j>i\leq k\leq  m.
\end{equation}
Thus we see that $w_i\in D(4,\,\mathfrak s\mathfrak f\cap \mathfrak f^3)$ only if
\begin{equation}
e_k|a_{ijk}\ (m\ge j>i\leq k\leq m),
\end{equation}and so $w_i$ and hence $w\in [\gamma_2(F),\,S]\gamma_4(F)$.
Hence $D(4,\,\mathfrak s\mathfrak f\cap \mathfrak f^3)\subseteq [\gamma_2(F),\,S]\gamma_4(F).$ The reverse inclusion is easily seen to hold.
\end{proof}
\par\vspace{.25cm}

\begin{lemma} If $F$ is a free group and $R$ a normal subgroup of $F$, then, modulo $\gamma_4(F)$, \\\\
$ D(4,\,\mathfrak f\mathfrak r\cap \mathfrak f^3)=
D(4,\,\mathfrak r\mathfrak f\cap \mathfrak f^3)\\\\=\langle [f',\,f]^e\,|\, f'\in \gamma_2(F), \, f\in F,\, f'^e\in (R\cap \gamma_2(F))\gamma_3(F),\ f^e\in R\gamma_2(F)\rangle.$
\end{lemma}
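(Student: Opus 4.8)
The plan is to imitate the proofs of Lemmas~\ref{fg1} and~\ref{l2}, in three steps. \textbf{Step 1: symmetry.} The involutory anti-automorphism $\ast$ of $\mathbb Z[F]$ given by $g\mapsto g^{-1}$ fixes $\mathfrak f$ and $\mathfrak r$, hence each power $\mathfrak f^k$, and it interchanges $\mathfrak{fr}$ and $\mathfrak{rf}$; being bijective it commutes with intersections, so $(\mathfrak{fr}\cap\mathfrak f^3+\mathfrak f^4)^\ast=\mathfrak{rf}\cap\mathfrak f^3+\mathfrak f^4$. Since $\mathfrak{fr}\cap\mathfrak f^3+\mathfrak f^4$ is a left ideal, $D(4,\mathfrak{fr}\cap\mathfrak f^3)$ is a subgroup of $F$ closed under inversion, and, as $(w-1)^\ast=w^{-1}-1$, applying $\ast$ to a membership relation yields $D(4,\mathfrak{fr}\cap\mathfrak f^3)=D(4,\mathfrak{rf}\cap\mathfrak f^3)$ as subgroups of $F$. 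Henceforth I use whichever of the two ideals is the more convenient.

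\textbf{Step 2: reduction to the normal form.} As in Corollary~\ref{corfrf}, an element of $D(4,\mathfrak{fr}\cap\mathfrak f^3)$ and a witness of its membership involve only finitely many free generators, so one may assume $F$ has finite rank $m$; then, after a change of basis via Smith normal form applied to the image of $R$ in $F/\gamma_2(F)\cong\mathbb Z^m$, one may assume $R$ is generated as in Lemma~\ref{fg1} by $x_1^{e_1}\xi_1,\dots,x_m^{e_m}\xi_m,\xi_{m+1},\dots$ with $\xi_i\in\gamma_2(F)$ and $e_m\mid\cdots\mid e_1$; here $R\gamma_2(F)=S:=\langle x_i^{e_i},\gamma_2(F)\rangle$. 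Both $D(4,\mathfrak{fr}\cap\mathfrak f^3)$ and the subgroup $W$ of the statement lie in $\gamma_3(F)=D_3(F)$, so, modulo $\gamma_4(F)$, each is a subgroup of the free abelian group on the basic commutators $[[x_j,x_i],x_k]$ $(j>i\le k)$; it suffices to show these two subgroups coincide, and since the conditions defining each will be ``diagonal'' in this basis, they can be compared one basic commutator at a time.

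\textbf{Step 3: the two computations and their comparison.} For $D(4,\mathfrak{fr}\cap\mathfrak f^3)$ one proceeds exactly as in Lemmas~\ref{fg1} and~\ref{l2}: write $w\equiv\prod_i w_i\pmod{\gamma_4(F)}$ with $w_i=\prod_{j>i\le k}[[x_j,x_i],x_k]^{a_{ijk}}$, expand $w_i-1$ modulo $\mathfrak f^4$ in the $(x_\bullet-1)$'s as in the proof of Lemma~\ref{fg1}, and then apply left and right free differentiations with respect to the generators; since $\mathfrak{fr}=\mathfrak f\mathfrak r$, each successive differentiation strips off one factor and, by the normal form, forces a divisibility condition on the $a_{ijk}$ expressed through the $e_\bullet$ (and through $\overline{R\cap\gamma_2(F)}\le\gamma_2(F)/\gamma_3(F)$, via the $\xi$'s). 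On the other hand, identifying $\gamma_3(F)/\gamma_4(F)$ with the degree-$3$ part of the free Lie ring, the class of $[f',f]^e$ is $e\,[\overline{f'},\overline f]$, and the hypotheses on a generator of $W$ say precisely that $e\,\overline{f'}\in M:=(R\cap\gamma_2(F))\gamma_3(F)/\gamma_3(F)$ and $e\,\overline f\in N:=R\gamma_2(F)/\gamma_2(F)=\langle e_i\bar x_i\rangle$; hence the image of $W$ is generated by the elements $\operatorname{lcm}(d_u,d_v)\,[u,v]$, where $d_u$ (resp.\ $d_v$) is the order of $u$ modulo $M$ (resp.\ of $v$ modulo $N$). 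One computes $M$ explicitly from the normal form — it is generated by the $e_i(\bar x_i\wedge\bar x_k)$ together with the classes of the $\xi$'s and of the commutators of the given generators of $R$ — and checks that the resulting system of conditions on the $[u,v]$ is exactly the one found for $D(4,\mathfrak{fr}\cap\mathfrak f^3)$. This proves all three equalities.

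\textbf{The main obstacle} is the first computation in Step~3: identifying $\mathfrak{fr}\cap\mathfrak f^3$ modulo $\mathfrak f^4$ correctly. It is \emph{not} the naive condition ``the rightmost slot of each weight-$3$ monomial is divisible by the corresponding $e_\bullet$'': already $[R\cap\gamma_2(F),F]$ produces monomials whose divisibility comes from an inner slot or from the relators $\xi_i$, so each input $r-1$ ($r\in R$) must be expanded with care before differentiating, and the degenerate basic commutators $[[x_j,x_i],x_i]$ must be treated separately from the $[[x_j,x_i],x_k]$ with $i,j,k$ distinct. Keeping this bookkeeping straight — and then verifying that the divisibilities it produces coincide with those cut out by the generators $[f',f]^e$ of $W$ — is where essentially all the work of the proof lies; the symmetry of Step~1 is used freely to pass between $\mathfrak{fr}$ and $\mathfrak{rf}$ whenever that simplifies a differentiation.
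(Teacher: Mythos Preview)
Your Step~1 (the anti-automorphism) and Step~2 (reduction to finite rank and Smith normal form) are exactly what the paper does. The divergence is in Step~3, and there your plan is both harder than necessary and contains an inaccurate claim.

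You propose to compute $D(4,\mathfrak{fr}\cap\mathfrak f^3)$ and $W$ separately in the basic-commutator basis and then match the resulting conditions, asserting these will be ``diagonal''. They are not: the condition $f'^e\in(R\cap\gamma_2(F))\gamma_3(F)$ involves the subgroup $M=(R\cap\gamma_2(F))\gamma_3(F)/\gamma_3(F)\subseteq\Lambda^2(F_{ab})$, which depends on the $\xi_i$ in an essentially arbitrary way and is \emph{not} spanned by a subset of the basic commutators. So the membership conditions on both sides couple the coefficients $a_{ijk}$ (for fixed $k$) through $M$, and the comparison cannot be done one basic commutator at a time. Your sketch of ``computing $M$ explicitly from the normal form'' does not resolve this, because $M$ need not admit any tractable description in terms of the $e_i$ alone.

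The paper sidesteps this entirely by a two-stage argument you do not mention. First, since $\mathfrak{rf}\subseteq\mathfrak{sf}$, one has $D(4,\mathfrak{rf}\cap\mathfrak f^3)\subseteq D(4,\mathfrak{sf}\cap\mathfrak f^3)$, and Lemma~\ref{l2} (already proved) gives the latter as $[\gamma_2(F),S]\gamma_4(F)$; this is the diagonal step, and it works because $S$ depends only on the $e_i$. Hence any $w\in D(4,\mathfrak{rf}\cap\mathfrak f^3)$ can be rewritten, modulo $\gamma_4(F)$, as $\prod_k[f_k,x_k]^{e_k}$ with $f_k\in\gamma_2(F)$. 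Second, feeding this expression back into $w-1\in\mathfrak{rf}\cap\mathfrak f^3+\mathfrak f^4$ and right-differentiating once with respect to $x_k$ yields $f_k^{e_k}-1\in\mathfrak r+\mathfrak f^3$, i.e.\ $f_k^{e_k}\in(R\cap\gamma_2(F))\gamma_3(F)$. Since $x_k^{e_k}\in R\gamma_2(F)$ by construction, each factor $[f_k,x_k]^{e_k}$ is already one of the prescribed generators of $W$. No explicit description of $M$ is ever needed.

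So the missing idea is: use Lemma~\ref{l2} as a black box to absorb the $S$-part of the problem and put $w$ into the shape $\prod_k[f_k,x_k]^{e_k}$, then a single further differentiation recovers the $R$-condition on $f_k$ directly in the form stated in the lemma.
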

\begin{proof}
The first equality is an immediate consequence of the canonical \linebreak anti-isomorphism of $\mathbb Z[F]$ induced by $w\mapsto w^{-1},\ w\in F$.\par\vspace{.25cm}
It is easy to check that
$$[f',\,f]^e\in D(4,\,\mathfrak r\mathfrak f\cap \mathfrak f^3)$$ if $ f'\in \gamma_2(F), \, f\in F,\, f'^e\in (R\cap \gamma_2(F))\gamma_3(F),\ f^e\in R\gamma_2(F).$
\par\vspace{.25cm} Conversely, let  $w\in D(4,\,\mathfrak r\mathfrak f\cap \mathfrak f^3).$  To analyze $w$, we may clearly assume that $F$ is finitely generated, $R,\ S$ are as in Lemma \ref{fg1}, and so  $w\in D(4, \,\mathfrak s\mathfrak f\cap \mathfrak f^3)$. Therefore, by Lemma \ref{l2}, we have, modulo $\gamma_4(F), $  $$w=\prod_{i=1}^{m-1}w_i$$ with
$$w_i=\prod_{m\ge j<i\leq k\leq m}[[x_j,\,x_i],\,x_k]^{e_kb_{ijk}},\ b_{ijk}\in \mathbb Z.$$ On collecting terms, we have, $$w\equiv \prod_{k=1}^m[f_k,\,x_k]^{e_k},\ f_k\in \gamma_2(F),\ 1\leq k\leq m.$$
Now $w-1\in \mathfrak r\mathfrak f\cap \mathfrak f^3+\mathfrak f^4$. Therefore we have $$\sum_{k=1}^me_k(f_k-1)(x_k-1)\ \in \mathfrak r\mathfrak f\cap \mathfrak f^3+\mathfrak f^4.$$ Differentiating with respect to $x_k$, yields $$f_k^{e_k}-1\in \mathfrak r+\mathfrak f^3.$$ Hence $$f_k^{e_k}\in (R\cap \gamma_2(F))\gamma_3(F),\ 1\leq k\leq m,$$and consequently  $$w\in \langle [f',\,f]^e\,|\, f'\in \gamma_2(F), \, f\in F,\, f'^e\in (R\cap \gamma_2(F))\gamma_3(F),\ f^e\in R\gamma_2(F)\rangle.$$
\end{proof}
\par\vspace{.25cm}
Observe that, for every free group $F$ and its normal subgroup $R$,
\begin{equation}\label{eqr}
\gamma_2(R)\cap\gamma_3(F)=[R\cap \gamma_2(F),\,R].
\end{equation}
To see this equality, consider the map between exterior squares
\begin{equation}\label{mapl2}
\Lambda^2(R/(R\cap F'))\to \Lambda^2(F_{ab}),
\end{equation}
induced by inclusion $R/(R\cap \gamma_2(F))\hookrightarrow F_{ab}$, where $F_{ab}$ denotes the abelianization of $F$.
The map (\ref{mapl2}) is a monomorphism, since it is induced by a monomorphism of free abelian groups. The needed equality (\ref{eqr}) now follows from the following identifications:
$$
\Lambda^2(R/(R\cap \gamma_2(F)))=\frac{\gamma_2(R)}{[R\cap\gamma_2( F),\,R]},\quad \Lambda^2(F_{ab})=\gamma_2(F)/\gamma_3(F).
$$
Thus, in particular,
$$
(\gamma_2(R)\cap \gamma_3(F))\gamma_4(F)=[R\cap \gamma_2(F),\,R]\gamma_4(F).
$$
In view of the known structure (\cite{Mac_Lane}, Chapter V, Section\,5) of
${\sf Tor}(A,\,B)$ for abelian groups $A,\,B$, the  preceding Lemma immediately yields  the following result.\par\vspace{.25cm}
\begin{lemma}\label{2lemma}
There is a natural epimorphism
$$
\phi: {\sf Tor}(\gamma_2(G)/\gamma_3(G), \,G_{ab})\to \frac{D(4,\,{\mathfrak f\mathfrak r}\cap {\mathfrak  f}^3)}{[R\cap \gamma_2(F),\,R]\gamma_4(F)}=\frac{D(4,\,{\mathfrak f\mathfrak r}\cap {\mathfrak  f}^3)}{(\gamma_2(R)\cap \gamma_3(F))\gamma_4(F)}.
$$
\end{lemma}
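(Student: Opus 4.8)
The plan is to define $\phi$ directly on the standard generators of ${\sf Tor}$ and then to check that it respects the defining relations. Recall (\cite{Mac_Lane}, Chapter~V, Section~5) that for abelian groups $A,B$ the group ${\sf Tor}(A,B)$ is generated by symbols $\langle a,m,b\rangle$ with $a\in A$, $b\in B$, $ma=mb=0$, subject to additivity in $a$ and in $b$ separately and to the slide relations $\langle a,mn,b\rangle=\langle na,m,b\rangle=\langle a,m,nb\rangle$ whenever all terms are defined. Using the natural identifications
$$\gamma_2(G)/\gamma_3(G)\;\cong\;\gamma_2(F)\big/(R\cap\gamma_2(F))\gamma_3(F),\qquad G_{ab}\;\cong\;F/R\gamma_2(F),$$
a generator $\langle\bar c,m,\bar g\rangle$ of ${\sf Tor}(\gamma_2(G)/\gamma_3(G),\,G_{ab})$ is exactly the datum of an element $f'\in\gamma_2(F)$ lifting $\bar c$ with $f'^{\,m}\in(R\cap\gamma_2(F))\gamma_3(F)$, together with $f\in F$ lifting $\bar g$ with $f^{\,m}\in R\gamma_2(F)$; this is precisely the shape of the generators of $D(4,\mathfrak{fr}\cap\mathfrak{f}^3)$ modulo $\gamma_4(F)$ produced by the preceding Lemma. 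I would therefore set
$$\phi\big(\langle\bar c,m,\bar g\rangle\big)\;:=\;[f',\,f]^{\,m}\cdot[R\cap\gamma_2(F),\,R]\gamma_4(F).$$

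The heart of the argument is well-definedness, and this is the step I expect to be the main obstacle. First one must check that the class of $[f',f]^{\,m}$ modulo $[R\cap\gamma_2(F),R]\gamma_4(F)$ is independent of the chosen lifts. Altering $f'$ by an element of $\gamma_3(F)$ changes $[f',f]^{\,m}$ only modulo $\gamma_4(F)$; altering $f'$ by $v\in R\cap\gamma_2(F)$ changes it by $[v,f]^{\,m}$, and writing $f^{\,m}=rg'$ with $r\in R$, $g'\in\gamma_2(F)$ one obtains, modulo $\gamma_4(F)$,
$$[v,f]^{\,m}\equiv[v,f^{\,m}]\equiv[v,r]\,[v,g']\equiv[v,r],$$
since commutators expand bilinearly modulo $\gamma_4(F)$ and $[v,g']\in[\gamma_2(F),\gamma_2(F)]\subseteq\gamma_4(F)$, while $[v,r]\in[R\cap\gamma_2(F),R]$. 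The symmetric computation — changing $f$ by an element of $R\gamma_2(F)$ and invoking instead $f'^{\,m}\in(R\cap\gamma_2(F))\gamma_3(F)$ — is handled the same way. Once independence of lifts is in place, the ${\sf Tor}$-relations follow formally: additivity in each slot reflects bilinearity of the commutator modulo $\gamma_4(F)$ together with the fact that $f_1'f_2'$ and $f_1f_2$ are again admissible lifts, and the slide relations follow because $f'^{\,n}$ and $f^{\,n}$ are admissible lifts of $n\bar c$ and $n\bar g$ and $[f'^{\,n},f]^{\,m}\equiv[f',f^{\,n}]^{\,m}\equiv[f',f]^{\,mn}\pmod{\gamma_4(F)}$. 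This shows $\phi$ is a well-defined group homomorphism.

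Surjectivity then costs nothing: by the preceding Lemma, $D(4,\mathfrak{fr}\cap\mathfrak{f}^3)$ is generated modulo $\gamma_4(F)$ — equivalently, modulo $[R\cap\gamma_2(F),R]\gamma_4(F)=(\gamma_2(R)\cap\gamma_3(F))\gamma_4(F)$ by \eqref{eqr} — by elements $[f',f]^{\,e}$ with $f',f$ admissible, and each such element is $\phi\big(\langle\bar f',e,\bar f\rangle\big)$. Naturality in the free presentation $R\hookrightarrow F\twoheadrightarrow G$, and in $G$, is immediate, since a morphism of presentations carries admissible lifts to admissible lifts and commutes with the formation of commutators and with all the subgroups that occur. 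As indicated, the only genuinely delicate point is verifying that a change of lift moves $[f',f]^{\,m}$ only within $[R\cap\gamma_2(F),R]\gamma_4(F)$; this rests on the identity \eqref{eqr} and on careful bookkeeping of which commutators vanish modulo $\gamma_4(F)$.
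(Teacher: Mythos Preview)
Your proposal is correct and is precisely the argument the paper has in mind: the paper's own proof consists of a single sentence saying that the result follows immediately from the preceding Lemma together with the Mac~Lane presentation of ${\sf Tor}$, and you have simply spelled out the verification (independence of lifts, compatibility with the ${\sf Tor}$-relations, surjectivity, naturality) that the paper leaves to the reader. There is nothing to add or correct.
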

\vspace{.5cm}
\section{Proof of theorem \ref{thdim}}
First observe that the cokernel of the natural map
$$
\frac{R\cap D(4,{\mathfrak  f\mathfrak r})}{\gamma_2(R)
\gamma_4(F)}\hookrightarrow \frac{D(4,\,{\mathfrak  f\mathfrak r})}{\gamma_2(R)
\gamma_4(F)}
$$
can be naturally identified with the fourth dimension quotient $$\frac{D_4(G)}{\gamma_4(G)}=\frac{D(4,\,\mathfrak  r\mathbb Z[F])}{R\gamma_4(F)}$$(see, for example,  \cite{Gupta:1987}, p.\. 80). There is an obvious short exact sequence
$$
\frac{R\cap D(4,\,{\mathfrak  f\mathfrak r})}{\gamma_2(R)
\gamma_4(F)}\hookrightarrow \frac{D(4,{\mathfrak  f\mathfrak r})}{\gamma_2(R)
\gamma_4(F)}\to \frac{D_4(G)}{\gamma_4(G)}.
$$
The right hand map is, in fact, surjective. For, let $w\in D(4, \,{\mathfrak  r\mathbb Z[F]})$ so that $w-1\in {\mathfrak  r}\mathbb Z[F]+{\mathfrak  f}^4$. The quotient $\frac{\mathfrak  r\mathbb Z[F]}{\mathfrak  f\mathfrak r}$ is the relation module $R/\gamma_2(R)$, and so  there is a natural epimorphism
$$
R/\gamma_2(R)\twoheadrightarrow \frac{{\mathfrak  r}\mathbb Z[F]+{\mathfrak f}^4}{{\mathfrak  f\mathfrak r}+{\mathfrak  f}^4}
$$
and we can find an element $r\in R$, such that $wr-1\in {\mathfrak  f\mathfrak r}+{\mathfrak  f}^4$.
It thus follows  that every element of the dimension quotient $\frac{D_4(G)}{\gamma_4(G)}$ has a preimage in $\frac{D(4,\,\mathfrak f\mathfrak r)}{\gamma_2(R)
\gamma_4(F)}$.
\par\vspace{.25cm}
Consider the natural diagram with exact rows and columns
\begin{equation}\label{1dia}
\xyma{\frac{R\cap D(4,\,\mathfrak f\mathfrak r)}{\gamma_2(R)
\gamma_4(F)}\ar@{>->}[r]\ar@{>->}[d] &\frac{R\cap \gamma_2(F)}{\gamma_2(R)(R\cap \gamma_4(F))}\ar@{->}[r]\ar@{>->}[d] & \frac{{\mathfrak f}^2}{\mathfrak f\mathfrak r+{\mathfrak f}^4}\ar@{->>}[r]\ar@{=}[d] & \frac{{\mathfrak f}^2}{(R\cap \gamma_2(F)-1)+\mathfrak f\mathfrak r+{\mathfrak f}^4}\ar@{->>}[d]\\
\frac{D(4,\,\mathfrak f\mathfrak r)}{\gamma_2(R)
\gamma_4(F)}\ar@{>->}[r]\ar@{->>}[d] &\frac{\gamma_2(F)}{\gamma_2(R)\gamma_4(F)}\ar@{->}[r] \ar@{->>}[d] & \frac{{\mathfrak f}^2}{\mathfrak f\mathfrak r+{\mathfrak f}^4}\ar@{->>}[r] & \frac{{\mathfrak f}^2}{(\gamma_2(F)-1)+\mathfrak f\mathfrak r+{\mathfrak f}^4}\\
\frac{D_4(G)}{\gamma_4(G)}\ar@{>->}[r] & \frac{\gamma_2(G)}{\gamma_4(G)}
}
\end{equation}
Now observe that
$$
\ilimit \frac{{\mathfrak f}^2}{\mathfrak f\mathfrak r+{\mathfrak f}^4}=\ilimit \frac{\mathfrak f}{{\mathfrak f}^3}\otimes_{\mathbb Z[G]} \frac{\mathfrak f}{{\mathfrak r}+{\mathfrak f}^3}=0.
$$
This follows from monoadditivity of $\frac{\mathfrak f}{{\mathfrak f}^3}\otimes_{\mathbb Z[G]} \frac{\mathfrak f}{{\mathfrak r}+{\mathfrak f}^3}=\frac{\mathfrak f}{{\mathfrak f}^3}\otimes_{\mathbb Z[G]} \frac{\mathfrak g}{{\mathfrak g}^3}$ which can be easily checked.
The top two horizontal exact sequences in (\ref{1dia}) imply the natural isomorphisms
\begin{align*}
& \ilimit \frac{R\cap D(4,\,\mathfrak f\mathfrak r)}{\gamma_2(R)
\gamma_4(F)}\simeq\ilimit \frac{R\cap \gamma_2(F)}{\gamma_2(R)(R\cap \gamma_4(F))}\\
& \ilimit\frac{D(4,\mathfrak f\mathfrak r)}{\gamma_2(R)
\gamma_4(F)}\simeq \ilimit \frac{\gamma_2(F)}{\gamma_2(R)\gamma_4(F)}.
\end{align*}
Therefore, the left hand vertical exact sequence in diagram (\ref{1dia}) implies that we have the following the long exact sequence
\begin{equation}\label{seq1}
\ilimit \frac{R\cap \gamma_2(F)}{\gamma_2(R)(R\cap \gamma_4(F))}\hookrightarrow \ilimit \frac{\gamma_2(F)}{\gamma_2(R)\gamma_4(F)}\to\frac{D_4(G)}{\gamma_4(G)}\to \ilimit^1 \frac{R\cap D(4,\,\mathfrak f\mathfrak r)}{\gamma_2(R)
\gamma_4(F)}
\end{equation}
Next consider the following diagram with exact rows and columns
\begin{equation}\label{2dia}
\xyma{\frac{D(4,\,\mathfrak f\mathfrak r\cap {\mathfrak f}^3)}{(\gamma_2(R)\cap \gamma_3(F))\gamma_4(F)}\ar@{>->}[r]\ar@{>->}[d] & \frac{D(4,\,\mathfrak f\mathfrak r)}{\gamma_2(R)\gamma_4(F)}\ar@{->}[r]\ar@{>->}[d] & \frac{D(3,\,\mathfrak f\mathfrak r)}{\gamma_2(R)\gamma_3(F)}\ar@{>->}[d]\\
\frac{\gamma_3(F)}{(\gamma_2(R)\cap \gamma_3(F))\gamma_4(F)}\ar@{>->}[r]\ar@{->}[d] & \frac{\gamma_2(F)}{\gamma_2(R)\gamma_4(F)}\ar@{->>}[r] \ar@{->}[d] & \frac{\gamma_2(F)}{\gamma_2(R)\gamma_3(F)}\ar@{->}[d]\\
\frac{{\mathfrak f}^3}{\mathfrak f\mathfrak r\cap {\mathfrak f}^3+{\mathfrak f}^4}\ar@{>->}[r] & \frac{{\mathfrak f}^2}{\mathfrak f\mathfrak r+{\mathfrak f}^4}\ar@{->>}[r] & \frac{{\mathfrak f}^2}{\mathfrak f\mathfrak r+{\mathfrak f}^3}}
\end{equation}
(observe that, the right hand horizontal map $\frac{D(4,\,\mathfrak f\mathfrak r)}{\gamma_2(R)\gamma_4(F)}\to \frac{D(3,\,\mathfrak f\mathfrak r)}{\gamma_2(R)\gamma_3(F)}$ is not, in general, surjective). The generalized dimension quotient $\frac{D(3,\,\mathfrak f\mathfrak r)}{\gamma_2(R)\gamma_3(F)}$ is identified with the first derived functor of the symmetric square:
$$
\frac{D(3,\,\mathfrak f\mathfrak r)}{\gamma_2(R)\gamma_3(F)}=L_1{\sf SP}^2(G_{ab})
$$
(see (\ref{3fr})). By Lemmas \ref{limlemma} and \ref{2lemma}, the dimension quotient
$$
\frac{D(4,\,\mathfrak f\mathfrak r\cap {\mathfrak f}^3)}{(\gamma_2(R)\cap \gamma_3(F))\gamma_4(F)}=\frac{D(4,\,\mathfrak f\mathfrak r\cap {\mathfrak f}^3)}{[R\cap \gamma_2(F),\,R]\gamma_4(F)}
$$
is a $G$-representation. Therefore, by Lemma \ref{limlemma}, using the upper horizontal exact sequence in the diagram (\ref{2dia}), we conclude that $\frac{D(4,\,\mathfrak f\mathfrak r)}{\gamma_2(R)\gamma_4(F)}$ is a $G$-representation, and
$$
\ilimit \frac{D(4,\,\mathfrak f\mathfrak r)}{\gamma_2(R)\gamma_4(F)}=\frac{D(4,\,\mathfrak f\mathfrak r)}{\gamma_2(R)\gamma_4(F)}.
$$
Looking at the left hand vertical epimorphism  in the diagram (\ref{1dia}), we conclude that the natural map
$$
\ilimit \frac{D(4,\,\mathfrak f\mathfrak r)}{\gamma_2(R)\gamma_4(F)}=\ilimit \frac{\gamma_2(F)}{\gamma_2(R)\gamma_4(F)}\to \frac{D_4(G)}{\gamma_4(G)}
$$
is an epimorphism, that is, the map $\frac{D_4(G)}{\gamma_4(G)}\to \ilimit^1 \frac{R\cap D(4,\,\mathfrak f\mathfrak r)}{\gamma_2(R)
\gamma_4(F)}$ in (\ref{seq1}) is zero and the asserted short exact sequence follows, and the proof is complete.
\par\vspace{.5cm}
\section{Proof of theorem \ref{fox}}\par\noindent
(a) Let us set
$G(2,R):=[R\cap \gamma_2(F),\,R\cap \gamma_2(F)]\gamma_3(R).$ Since $F(2,R)=G(2,R)$, $F(3,R)$ is a subgroup of $G(2,R)$.
Observe that, in view of (\ref{eqr}), we have
$$
\gamma_2(R\cap \gamma_2(F))\cap \gamma_3(R)=[\gamma_2(R),\, R\cap \gamma_2(F)],
$$
and
\begin{multline*}
\gamma_2(R\cap \gamma_3(F))\cap \gamma_3(R)=[\gamma_2(R)\cap \gamma_3(F),\,R\cap \gamma_3(F)]=\\ [[R\cap \gamma_2(F),\,R],\,R\cap \gamma_3(F)]\subseteq [[R\cap \gamma_2(F),\,R],\,R\cap \gamma_2(F)].
\end{multline*}
We have the following natural diagram with exact rows and columns\par\vspace{.25cm}
\begin{equation}\label{foxdia}
\xyma{& \frac{F(3,\,R)}{G(3,\,R)}\ar@{>->}[d] \ar@{->}[r] &
\frac{\gamma_2(R\cap \gamma_2(F))\cap (1+\mathfrak r\mathfrak f^3+{\mathfrak  r^3\mathbb Z[F]})}
{\gamma_2(R\cap \gamma_3(F))[\gamma_2(R),\,R\cap \gamma_2(F)]}\ar@{>->}[d]\\
\frac{\gamma_3(R)}{[R\cap \gamma_2(F),\,R,\,R\cap \gamma_2(F)]\gamma_4(R)}\ar@{>->}[r]\ar@{->}[d] & \frac{G(2,\,R)}{G(3,\,R)} \ar@{->>}[r] \ar@{->}[d] & \frac{\gamma_2(R\cap \gamma_2(F))}{\gamma_2(R\cap \gamma_3(F))[\gamma_2(R),\,R\cap \gamma_2(F)]}\ar@{->}[d] \\ \frac{{\mathfrak r}^3\mathbb Z[F]}{{\mathfrak r}^3\mathbb Z[F]\cap {\mathfrak r\mathfrak f}^3} \ar@{>->}[r] & \frac{{\mathfrak  r\mathfrak f}^2}{{\mathfrak  r\mathfrak f}^3}\ar@{->>}[r] & \frac{{\mathfrak  r\mathfrak f}^2}{{\mathfrak  r\mathfrak f}^3+{\mathfrak r}^3\mathbb Z[F]}}
\end{equation}

Let us set  $$I:=\frac{R\cap \gamma_2(F)}{\gamma_2(R)},\quad J:=\frac{R\cap \gamma_3(F)}{\gamma_2(R)\cap \gamma_3(F)}.$$ Clearly  $J\subset I\subset R/\gamma_2(R)$. Observe that
$$
\frac{\Lambda^2(I)}{\Lambda^2(J)}=\frac{\gamma_2(R\cap \gamma_2(F))}{\gamma_2(R\cap \gamma_3(F))[\gamma_2(R),\,R\cap \gamma_2(F)]}.
$$
Invoking  the exact sequence (\ref{l1sp}), we have the following sequence
$$
L_1{\sf SP^2}(I/J)\hookrightarrow \frac{\Lambda^2(I)}{\Lambda^2(J)}\to I\otimes I/J.
$$
There is a natural isomorphism
$$
\frac{{\mathfrak  r\mathfrak f}^2}{{\mathfrak  r\mathfrak f}^3+{\mathfrak  r}^3\mathbb Z[F]}=\frac{\mathfrak  r\mathbb Z[F]}{\mathfrak  r\mathfrak f}\otimes \frac{{\mathfrak f}^2}{{\mathfrak f}^3+{\mathfrak  r}^2}
$$
[Observe that, ${\mathfrak  r\mathfrak f}^3+{\mathfrak  r}^3={\mathfrak  r\mathfrak f}^3+{\mathfrak  r}^3\mathbb Z[F],$ and we can omit the last $\mathbb Z[F]$ term.]
Recall that $D(3,\,\mathfrak r^2)=\gamma_2(R)\gamma_3(F)$ (see \cite{MP:2016}). Hence, there are monomorphisms
$$
I\hookrightarrow R/\gamma_2(R)=\frac{\mathfrak  r\mathbb Z[F]}{\mathfrak  r\mathfrak f},\quad I/J\hookrightarrow \frac{{\mathfrak f}^2}{{\mathfrak f}^3+{\mathfrak  r}^2}.
$$
Since both $R/\gamma_2(R)$ and ${\sf Coker}(I\hookrightarrow R/\gamma_2(R))=\frac{R}{R\cap \gamma_2(F)}$ are free abelian, we see that there is a natural monomorphism
$$
I\otimes I/J\hookrightarrow \frac{{\mathfrak  r\mathfrak f}^2}{{\mathfrak  r\mathfrak f}^3+{\mathfrak r}^3}.
$$
The sequence (\ref{l1sp}) implies the following diagram
$$
\xyma{L_1{\sf SP}^2(I/J)\ar@{>->}[r] & \frac{\Lambda^2(I)}{\Lambda^2(J)}\ar@{->}[r]\ar@{=}[d] & I\otimes I/J\ar@{>->}[d]\\ & \frac{\gamma_2(R\cap \gamma_2(F))}{\gamma_2(R\cap \gamma_3(F))[\gamma_2(R),\,R\cap \gamma_2(F)]} \ar@{->}[r] & \frac{{\mathfrak  r\mathfrak f}^2}{{\mathfrak  r\mathfrak f}^3+{\mathfrak r}^3}}
$$
We thus obtain the following identification:
$$
\frac{\gamma_2(R\cap \gamma_2(F))\cap (1+{\mathfrak  r\mathfrak f}^3+{\mathfrak  r}^3)}{\gamma_2(R\cap \gamma_3(F))[\gamma_2(R),\,R\cap \gamma_2(F)]}=L_1{\sf SP}^2(I/J).
$$
 There is a simple way to pick representatives of $L_1{\sf SP}^2(I/J)$ in $\gamma_2(R\cap \gamma_2(F))$, which also follows from the sequence (\ref{l1sp}). The subgroup $L_1{\sf SP}^2(I/J)$ is generated by elements
$$
[x,\,y]^m,\ x,\,y\in R\cap \gamma_2(F),\ x^m, \,y^m\in (R\cap \gamma_3(F))\gamma_2(R).
$$
One can easily check that, for such a pair $x,\,y,$
$$
[x,\,y]^m-1\in {\mathfrak r\mathfrak f}^3+{\mathfrak  r}^3.
$$
We assert that the horizontal arrow (let us call it $q$) in  the diagram (\ref{foxdia})
$$q: \frac{F(3,\,R)}{G(3,\,R)}\to \frac{\gamma_2(R\cap \gamma_2(F))\cap (1+{\mathfrak  r\mathfrak f}^3+{\mathfrak  r}^3)}{\gamma_2(R\cap \gamma_3(F))[\gamma_2(R),\,R\cap \gamma_2(F)]}$$
is an epimorphism. For, let
\begin{align*}
& x^m=r_xs_x,\ r_x\in R\cap \gamma_3(F),\ s_x\in \gamma_2(R),\\
& y^m=r_ys_y,\ r_y\in R\cap \gamma_3(F),\ s_y\in \gamma_2(R).
\end{align*}
Consider the element
$$
w:=[x,\,y]^m[x,\,s_y]^{-1}[y,\,s_x].
$$
Clearly, $q(w)=[x,\,y]^m$, since $[x,\,s_y]^{-1}[y,\,s_x]\in \gamma_3(R)$.  Working modulo ${\mathfrak  r\mathfrak f}^3,$ we have
\begin{align*}
w-1& \equiv m(x-1)(y-1)-m(y-1)(x-1)-([x,\,s_y]-1)+([y,\,s_x]-1)\\ & \equiv (x-1)(y^m-1)-(y-1)(x^m-1)-([x,\,s_y]-1)+([y,\,s_x]-1)\\ & \equiv
(x-1)(s_y-1)-(y-1)(s_x-1)-([x,\,s_y]-1)+([y,\,s_x]-1)\\ & \equiv (s_y-1)(x-1)-(s_x-1)(y-1)\equiv 0,
\end{align*}
i.e.,  $w\in F(3,\,R)$. Hence, the map $q$ is an epimorphism.
\par\vspace{.25cm}
We next  show that the left hand vertical map in the diagram  (\ref{foxdia}), namely,
$$\frac{\gamma_3(R)}{[[R\cap \gamma_2(F),\,R],\,R\cap \gamma_2(F)]\gamma_4(R)}\to \frac{{\mathfrak  r}^3\mathbb Z[F]}{{\mathfrak  r}^3\mathbb Z[F]\cap {\mathfrak  r\mathfrak f}^3}$$
is a monomorphism, i.e.,
$$
\gamma_3(R)\cap (1+{\mathfrak  r}^3\mathbb Z[F]\cap {\mathfrak  r\mathfrak f}^3)=[[R\cap \gamma_2(F),\,R],\,R\cap \gamma_2(F)]\gamma_4(R).
$$
Clearly,
$$
\gamma_3(R)\cap (1+{\mathfrak  r}^3\mathbb Z[F]\cap {\mathfrak  r\mathfrak f}^3)=\gamma_3(R)\cap (1+\mathfrak r^3\cap {\mathfrak  r\mathfrak f}^3).
$$
Next we will use the following identification (see \cite{KKV}):
$$
\mathfrak r^2\cap {\mathfrak f}^3=\mathfrak r^3+\mathfrak r\Delta(R\cap \gamma_2(F))+\Delta(\gamma_2(R)\cap \gamma_3(F)).
$$
where $\Delta(H) $ denotes the augmentation ideal of the group ring $\mathbb Z[H]$.
We have
\begin{align*}
& \mathfrak r^3\cap {\mathfrak  r\mathfrak f}^3=\mathfrak r(\mathfrak r^2\cap {\mathfrak f}^3)\quad \text{(recall the quote preceding Lemma \ref{fg1})}\\&= \mathfrak r(\mathfrak r\Delta(R\cap \gamma_2(F))+\Delta(R\cap \gamma_2(F))\mathfrak r+\Delta(\gamma_2(R)\cap \gamma_3(F))+\mathfrak r^3)\\&=\mathfrak r(\mathfrak r\Delta(R\cap \gamma_2(F))+\Delta(R\cap \gamma_2(F))\mathfrak r+\Delta([R\cap \gamma_2(F),R])+\mathfrak r^3)\\&=
 \mathfrak r\Delta(R\cap \gamma_2(F))\mathfrak r+\mathfrak r^2\Delta(R\cap \gamma_2(F))+\mathfrak r^4,
\end{align*}
Corollary \ref{corfrf} implies that the right hand vertical arrow in (\ref{foxdia}) is a monomorphism. Hence
$$
\frac{F(3,\,R)}{G(3,\,R)}\simeq L_1{\sf SP}^2(I/J)=L_1{\sf SP}^2\left(\frac{R\cap \gamma_2(F)}{\gamma_2(R)(R\cap \gamma_3(F))}\right)
$$
and the proof part (a) of Theorem \ref{fox} is complete.
\par\vspace{.5cm}\noindent
 Part (b) follows from the part (a) together with the lifting of elements from $L_1{\sf SP}^2(I/J)$ described above.
\par\vspace{.5cm}
\section{Proof of theorem \ref{foxlimit}}
By Theorem 2,
$$
\ilimit \frac{F(3,R)}{G(3,R)}=\ilimit L_1{\sf SP}^2\left(\frac{R\cap \gamma_2(F)}{\gamma_2(R)(R\cap \gamma_3(F))}\right).
$$
In order to study right hand limit, consider  the following diagram with exact rows and columns
\begin{equation}\label{diaw}
\xyma{L_1{\sf SP}^2(G_{ab})\ar@{>->}[d]\ar@{=}[r] & L_1{\sf SP}^2(G_{ab})\ar@{>->}[d]\\
\frac{R\cap \gamma_2(F)}{\gamma_2(R)(R\cap \gamma_3(F))}\ar@{->}[d]\ar@{>->}[r] & \frac{\gamma_2(F)}{\gamma_2(R)\gamma_3(F)}\ar@{->>}[r]\ar@{->}[d] & \gamma_2(G)/\gamma_3(G)\ar@{>->}[d]\\
\frac{{\mathfrak r}\cap {\mathfrak f}^2+{\mathfrak f}^3}{\mathfrak f\mathfrak r+{\mathfrak f}^3}\ar@{>->}[r]& \frac{{\mathfrak f}^2}{\mathfrak f\mathfrak r+{\mathfrak f}^3}\ar@{->>}[r]\ar@{->>}[d] & {\mathfrak g}^2/{\mathfrak g}^3\ar@{->>}[d]\\
& {\sf SP}^2(G_{ab}) \ar@{=}[r] & {\sf SP}^2(G_{ab}). }
\end{equation}
The middle vertical sequence in (\ref{diaw}) is the sequence (\ref{l1sp}) (for $I=F_{ab}$, $J=R/(R\cap \gamma_2(F))$):
$$
\xyma{L_1{\sf SP}^2(G_{ab})\ar@{>->}[r] & \frac{\Lambda^2(F_{ab})}{\Lambda^2(R/(R\cap \gamma_2(F)))}\ar@{=}[d]\ar@{->}[r] & F_{ab}\otimes G_{ab}\ar@{=}[d]\ar@{->>}[r] & {\sf SP}^2(G_{ab})\\ & \frac{\gamma_2(F)}{\gamma_2(R)\gamma_3(F)}\ar@{->}[r] &  \frac{{\mathfrak f}^2}{\mathfrak f\mathfrak r+{\mathfrak f}^3}}
$$
The right hand vertical map $\gamma_2(G)/\gamma_3(G)\to {\mathfrak g}^2/{\mathfrak g}^3$ is a monomorphism, since $D_3(G)=\gamma_3(G)$.
Let us set
$$
K:=\frac{R\cap \gamma_2(F)}{\gamma_2(R)(R\cap \gamma_3(F))},\ L:=\frac{{\mathfrak  r}\mathbb Z[F]\cap {\mathfrak f}^2+{\mathfrak f}^3}{\mathfrak f\mathfrak r+{\mathfrak f}^3}.
$$
Then  we have the following  short exact sequence:
$$
0\to L_1{\sf SP}^2(G_{ab})\to K\to L\to 0.
$$
Observe that, the monomorphism $L_1{\sf SP}^2(G_{ab})\hookrightarrow K$ implies that the induced map
$$
{\sf Tor}(L_1{\sf SP}^2(G_{ab}), \,L_1{\sf SP}^2(G_{ab}))\to {\sf Tor}(K,\,K)
$$
 is also a monomorphism. Therefore, the induced maps
\begin{align*}
& L_1{\sf SP}^2(L_1{\sf SP}^2(G_{ab}))\to L_1{\sf SP}^2(K),\\
& L_1\Lambda^2(L_1{\sf SP}^2(G_{ab}))\to L_1\Lambda^2(K)
\end{align*}
 are also monomorphisms, since $L_1{\sf SP}^2$ and $L_1\Lambda^2$ are subfunctors of the ${\sf Tor}$-functor. Since the functor ${\sf Tor}(-,\,\mathbb Z/2)$ is left-exact, the sequence (\ref{l1to}) implies that the induced map
$$
L_1\tilde\otimes^2(L_1{\sf SP}^2(G_{ab}))\to L_1\tilde\otimes^2(K)
$$
is a monomorphism. Now the sequence (\ref{les}) implies that we have the inclusion
$$
\frac{L_1{\sf SP}^2(K)}{L_1{\sf SP}^2(L_1{\sf SP}^2(G_{ab}))}\hookrightarrow \frac{{\sf Tor}(K,\,K)}{{\sf Tor}(L_1{\sf SP}^2(G_{ab}),\,L_1{\sf SP}^2(G_{ab}))}.
$$
We assert that
\begin{equation}\label{vantor}
\ilimit \frac{{\sf Tor}(K,\,K)}{{\sf Tor}(L_1{\sf SP}^2(G_{ab}),\,L_1{\sf SP}^2(G_{ab}))}=0.
\end{equation}
The natural consequence of the above  vanishing  of $\ilimit$ is that we have  the isomorphism
$$
L_1{\sf SP}^2(L_1{\sf SP}^2(G_{ab}))=\ilimit L_1{\sf SP}^2(K)
$$
and Theorem \ref{foxlimit} will follow.
\par\vspace{.25cm}It remains to establish the vanishing result (\ref{vantor}).
Observe that we have an exact sequence
$$
0\to {\sf Tor}(L_1{\sf SP}^2(G_{ab}),\, L_1{\sf SP}^2(G_{ab}))\to {\sf Tor}(L_1{\sf SP}^2(G_{ab}), \,K)\to {\sf Tor}(L_1{\sf SP}^2(G_{ab}), \,L).
$$
Since $L\subset \frac{{\mathfrak f}^2}{{\mathfrak r\mathfrak f}+{\mathfrak f}^3}=G_{ab}\otimes F_{ab},$ there is a natural inclusion
$$
{\sf Tor}(L_1{\sf SP}^2(G_{ab}), \,L)\hookrightarrow {\sf Tor}(L_1{\sf SP}^2(G_{ab}), \,G_{ab}\otimes F_{ab}).
$$
The short exact sequence
$$
0\to \frac{R}{R\cap \gamma_2(F)}\otimes F_{ab}\to F_{ab}\otimes F_{ab}\to G_{ab}\otimes F_{ab}\to 0,
$$ on
tensoring with $L_1{\sf SP}^2(G_{ab})$, gives an inclusion
$$
{\sf Tor}(L_1{\sf SP}^2(G_{ab}), \,G_{ab}\otimes F_{ab})\hookrightarrow {\sf Tor}(L_1{\sf SP}^2(G_{ab}), \,G_{ab}\otimes F_{ab})\otimes\frac{R}{R\cap \gamma_2(F)}\otimes F_{ab}.
$$
Since the representation ${\sf Tor}(L_1{\sf SP}^2(G_{ab}), \,G_{ab}\otimes F_{ab})\otimes\frac{R}{R\cap \gamma_2(F)}\otimes F_{ab}$ is mono-additive
[one can easily check that, for any representation $\mathcal F$, the representation $\mathcal F\otimes F_{ab}$ is mono-additive], it follows that
$$
\ilimit {\sf Tor}(L_1{\sf SP}^2(G_{ab}), L)=0
$$
and consequently
$$
{\sf Tor}(L_1{\sf SP}^2(G_{ab}), \,L_1{\sf SP}^2(G_{ab}))=\ilimit {\sf Tor}(L_1{\sf SP}^2(G_{ab}),\, K).
$$
A similar argument  shows that $\ilimit {\sf Tor}(L,\,K)=0$ and hence
$$
\ilimit {\sf Tor}(L_1{\sf SP}^2(G_{ab}), K)=\ilimit {\sf Tor}(K,\,K).
$$
Since $\ilimit^1 {\sf Tor}(L_1{\sf SP}^2(G_{ab}), L_1{\sf SP}^2(G_{ab}))=0$, we conclude  (\ref{vantor}). $\Box$
\par\vspace{.5cm}
\noindent{\bf Remark.} Theorem \ref{fox} implies that there is a natural exact sequence
$$
0\to L_1{\sf SP}^2(K)\to \frac{G(2,\,R)}{G(3,\,R)}\to \frac{{\mathfrak  r\mathfrak f}^2}{{\mathfrak  r\mathfrak f}^3},
$$
The quotient
$$
\frac{{\mathfrak  r\mathfrak f}^2}{{\mathfrak r\mathfrak f}^3}\simeq \frac{\mathfrak  r\mathbb Z[F]}{\mathfrak  r\mathfrak f}\otimes \frac{{\mathfrak f}^2}{{\mathfrak f}^3}
$$
is a mono-additive representation. Hence,
$$
\ilimit \frac{G(2,\,R)}{G(3,\,R)}=L_1{\sf SP}^2(L_1{\sf SP}^2(G_{ab})).
$$
\par\vspace{.5cm}
\noindent{\bf Remark.} The middle vertical sequence in (\ref{diaw}) implies that, if $G_{ab}$ is torsion-free, then the group $\frac{\gamma_2(F)}{\gamma_2(R)\gamma_3(F)}$ is torsion-free as well. Since the quotient $\frac{R\cap \gamma_2(F)}{\gamma_2(R)(R\cap \gamma_3(F))}$ is a subgroup of $\frac{\gamma_2(F)}{\gamma_2(R)\gamma_3(F)}$, we conclude that, if $G_{ab}$ is torsion-free, then $F(3,\,R)=G(3,\,R),$ by Theorem 2.
\par\vspace{.5cm}
\centerline{\bf Acknowledgement}\par\vspace{.25cm}
The research is supported by the Russian Science Foundation grant
N 16-11-10073. The authors are thankful to Harish-Chandra Research Institute, Allahabad, for the warm hospitality provided to them during their visit in February 2017. \par\vspace{.5cm}

\newpage
\par\vspace{1cm}\noindent
Roman Mikhailov\\ St Petersburg
Department of Steklov Mathematical Institute,\\Chebyshev
Laboratory, St Petersburg State University\\ 14th Line, 29b, Saint
Petersburg 199178 Russia\\
and \\School of Mathematics, Tata Institute of Fundamental Research\\
Mumbai 400005, India\\
Email:\ romanvm@mi.ras.ru\par\vspace{.5cm}\noindent
Inder Bir S. Passi\\ Centre for Advanced Study in Mathematics\\ Panjab
University, Sector 14, Chandigarh 160014 India\\ and \\ Indian
Institute of Science Education and Research, Mohali (Punjab)
140306 India\\
Email:\ ibspassi@yahoo.co.in
\end{document}